\documentclass{amsart}
\usepackage{mathtools, microtype, mathrsfs, xfrac, hyperref, amssymb, enumerate, xspace, xypic}
\usepackage[latin1]{inputenc}
\usepackage{tikz-cd}

\usepackage{todonotes}

\oddsidemargin=0in \evensidemargin=0in 
\textwidth=6.5in \textheight=8.5in

% \usepackage{fullpage}

%\usepackage{showlabels}

%\usepackage[notcite, notref]{showkeys}

%%%%%%%%%%%%%%%%%%%%%%%%%%%%%%%%%%%%%%%%%

%\numberwithin{equation}{section}

\setcounter{tocdepth}{1}

\numberwithin{subsection}{section}

\allowdisplaybreaks[1]

%%%%%%%%%%%%%%%%%%%%%%%%%%%%%%%%%%%%%%%%%

\newtheorem*{namedtheorem}{\theoremname}
\newcommand{\theoremname}{testing}

\newtheorem{theorem}{Theorem}[section]

 \newtheorem*{untheorem}{Theorem}
\newtheorem{proposition}[theorem]{Proposition}
\newtheorem{proposition-definition}[theorem]
{Proposition-Definition}
\newtheorem{corollary}[theorem]{Corollary}
\newtheorem{lemma}[theorem]{Lemma}

\theoremstyle{definition}

\newtheorem{remark}[theorem]{Remark}

\theoremstyle{remark}

%%%%%%%%%%%%%%%%%%%%%%%%%%%%%%%%%%%%%%%%%

%%%%%%%%%%%%%%%%%%%%%%%%%%%%%%%%%%%%%%%%

%\renewcommand{\mathcal}{\mathscr}

\newcommand{\B}{\mathscr{B}}

\newcommand\GG{\mathbb{G}} 
 
 \newcommand\LL{\mathbb{L}}

\newcommand\bA{\mathbb{A}} 
\newcommand\bC{\mathbb{C}} 
 \newcommand\bF{\mathbb{F}}
\newcommand\bG{\mathbb{G}} \newcommand\bH{\mathbb{H}}

 \newcommand\bN{\mathbb{N}}
 \newcommand\bP{\mathbb{P}}
 \newcommand\bR{\mathbb{R}}

 \newcommand\bZ{\mathbb{Z}}

\newcommand\rK{\mathrm{K}} 
 
\newcommand\rO{\mathrm{O}} 
 
 \newcommand\rT{\mathrm{T}}

 %\newcommand\frm{\mathfrak{m}}

%%%%%%%%%%%%%%%%%%%%%%%%%%%%%%%%%%%%%%%%

\newcommand\arr{\ifinner\to\else\longrightarrow\fi}

\newcommand\arrto{\ifinner\mapsto\else\longmapsto\fi}

\def\displaytimes_#1{\mathrel{\mathop{\times}\limits_{#1}}}

\def\displayotimes_#1{\mathrel{\mathop{\bigotimes}\limits_{#1}}}

\newcommand\aut{\operatorname{Aut}}

\newcommand\id{\mathrm{id}}

%these define dashes that will not break,
%but allow the next word to be hyphenated

\renewcommand{\setminus}{\smallsetminus}

\newcommand{\gm}{\GG_{m}}

\newcommand{\GL}{\mathrm{GL}}
\newcommand{\SL}{\mathrm{SL}}
\newcommand{\PGL}{\mathrm{PGL}}
\newcommand{\ga}{\GG_{a}}

\renewcommand{\Im}{\mathrm{Im}}

%\renewcommand{\epsilon}{\varepsilon}

%%%%%%%%%%%%%%%%%%%%%%%%%%%%%%%%%

\renewcommand{\O}{\rO}

\newcommand{\SO}{\mathrm{SO}}

\newcommand{\Spin}{\mathrm{Spin}}

\newcommand{\Pin}{\mathrm{Pin}}

\newcommand{\kstack}{\rK_{0}(\mathrm\overline{{\rm Stack}_{k}})}

\newcommand{\Spec}{\mathrm{Spec}\,}

%%%%%%%%%%%%%%%%%%%%%%%%%%%%%%%%%

\begin{document}

\title[Motivic class of $\B G_2$ and $\B\Spin_n$]{On the motivic class of the classifying stack \\ of $G_2$ and the spin groups}

\author[Roberto Pirisi]{Roberto Pirisi}

\address[Pirisi]{Department of Mathematics\\
The University of British Columbia\\
1984 Mathematics Road\\
Vancouver BC\\
V6T 1Z2 Canada}

\email{rpirisi@math.ubc.ca}

\author[Mattia Talpo]{Mattia Talpo}

\address[Talpo]{Department of Mathematics\\
Simon Fraser University\\
8888 University Drive\\
Burnaby BC\\
V5A 1S6 Canada}

\email{mtalpo@sfu.ca}

\maketitle

\begin{abstract}
We compute the class of the classifying stack of the exceptional algebraic group $G_2$ and of the spin groups $\Spin_7$ and $\Spin_8$ in the Grothendieck ring of stacks, and show that they are equal to the inverse of the class of the corresponding group. Furthermore, we show that the computation of the motivic classes of the stacks $\B\Spin_n$ can be reduced to the computation of the classes of $\B \Delta_n$, where $\Delta_n\subset \Pin_n$ {is the ``extraspecial $2$-group'',} the preimage of the diagonal matrices under the projection $\Pin_n\to \O_n$ to the orthogonal group.
\end{abstract}

\tableofcontents

\section{Introduction}

The Grothendieck ring of algebraic stacks  $\kstack$ over a field $k$ is generated, as an abelian group, by equivalence classes $\{X\}$ of algebraic stacks of finite type over $k$ with affine stabilizers. These classes are subject to the ``scissor relations'' $\{X\}=\{Y\}+\{X\setminus Y\}$ for a closed substack $Y\subseteq X$, and moreover to the relations $\{E\}=\{\bA^n\times_k X\}$ for a vector bundle $E\to X$ of rank $n$. The product is defined on generators by setting $\{X\}\cdot \{Y\}=\{X\times_k Y\}$, and extended by linearity. This object, a variant of the more well-known Grothendieck ring of varieties, was introduced by Ekedahl in \cite{ekedahl-grothendieck}, after it had appeared in different guises in several earlier works \cite{toen-grothendieck-stacks, behrend-dhillon, joyce-motivic-invariant}

Given an affine algebraic group $G$ over $k$, the classifying stack $\B G$ has a class $\{\B G\}$ in $\kstack$, whose computation is an interesting problem, morally related to Noether's problem of stable rationality of fields of invariants for $G$ (see the discussion in \cite[Section 6]{ekedahl-finite-group}).

For a connected group $G$, a first guess for the value of $\{\B G\}$ in $\kstack$ is the inverse of the class of $G$ itself: the formula $\{\B G\}=\{G\}^{-1}$ is true for special groups (i.e. those groups for which every $G$-torsor is locally trivial for the Zariski topology), because in this case for every $G$-torsor $X\to Y$ we have $\{X\}=\{G\}\cdot \{Y\}$. In particular this applies to the universal $G$-torsor $\Spec k\to \B G$.
On the other hand, Ekedahl has shown in \cite{ekedahl-approximation} that if $G$ is a connected and reductive non-special group and the characteristic of $k$ is $0$, then there exists a $G$-torsor $X\to Y$ such that $\{X\}\neq \{G\}\cdot \{Y\}$. In view of this result, it is perhaps more natural to expect that $\{\B G\}\neq \{G\}^{-1}$ for non-special $G$.

The class of $\{\B G\}$ for non-special $G$ has been computed in a few cases: for $G=\PGL_2$ and $\PGL_3$ by Bergh \cite{bergh-motivic-classes}, for $\SO_n$ with odd $n$ by Dhillon and Young \cite{dhillon-young-SOn} and for any $n$ by the second author and Vistoli \cite{talpo-vistoli}. In all these cases we have indeed (somewhat surprisingly) that $\{\B G\}=\{G\}^{-1}$ in $\kstack$.

Assume that $k$ is a field {of characteristic different from $2$, and containing a square root of $-1$}. In this paper we compute the class of $\B G$ in $\kstack$ for the {split forms of the} exceptional group $G_2$ and the spin groups $\Spin_7$ and $\Spin_8$ (note that $\Spin_n$ is special for $2\leq n\leq 6$), and show that in these cases as well, the class of $\B G$ coincides with $\{G\}^{-1}$.

\begin{untheorem}
The equality $\{\B G\}=\{G\}^{-1}$ holds in $\kstack$ for $G=G_2, \Spin_7, \Spin_8$.
\end{untheorem}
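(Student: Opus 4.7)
My plan is to treat the three groups sequentially, first the two spin groups and then $G_2$. The geometric thread running through is the chain of closed immersions $G_2 \subset \Spin_7 \subset \Spin_8$, in which both successive quotients $\Spin_8/\Spin_7$ and $\Spin_7/G_2$ are isomorphic to the smooth affine $7$\ddash dimensional quadric cut out by a non-degenerate quadratic form on $\AA^8$. Since $k$ contains $\sqrt{-1}$, this quadric is hyperbolic and its class in $\kvar$ is a transparent polynomial in $\{\AA^1\}$.

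For $n=7,8$ I would first invoke the reduction $\{\B\Spin_n\}\rightsquigarrow\{\B\Delta_n\}$ announced in the abstract. The motivating heuristic is that in characteristic $\neq 2$ a quadratic form is generically diagonalisable, so that an $\O_n$\ddash torsor admits, over a large open stratum of $\B\O_n$, a reduction of structure group to the diagonal subgroup $(\ZZ/2)^n\subset\O_n$; lifting through $\Pin_n\to\O_n$ turns this diagonal into the extraspecial $2$\ddash group $\Delta_n$, and a careful bookkeeping of the $\mu_2$\ddash gerbe $\B\Spin_n\to\B\SO_n$ should produce the reduction formula. Since $\Delta_n$ is finite, $\{\B\Delta_n\}$ can then be computed directly: fix a faithful linear representation $V$ of $\Delta_n$, stratify $V$ by stabiliser type, apply the vector bundle relation $\{V\}=\{\AA^1\}^{\dim V}\cdot\{\B\Delta_n\}$ together with the scissor relations, and evaluate. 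One expects the answer $\{\B\Delta_n\}\cdot\{\Delta_n\}=1$, which in turn gives $\{\B\Spin_n\}=\{\Spin_n\}^{-1}$ for $n=7,8$.

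With $\{\B\Spin_7\}=\{\Spin_7\}^{-1}$ in hand, the $G_2$\ddash case would exploit the $8$\ddash dimensional spin representation $V$ of $\Spin_7$: it carries a $\Spin_7$\ddash invariant quadratic form with $G_2$ as generic stabiliser, and --- since $k$ contains $\sqrt{-1}$ --- the form is hyperbolic and admits a rational section of the norm map, $\Spin_7$\ddash equivariantly trivialising $V\setminus N\cong\gm\times(\Spin_7/G_2)$, where $N\subset V$ is the null cone. Combining the vector bundle relation with the scissor decomposition gives
$$
\{\AA^1\}^8\cdot\{\B\Spin_7\}\;=\;\{\gm\}\cdot\{\B G_2\}\;+\;\{[N/\Spin_7]\}.
$$
A further stratification $N=\{0\}\sqcup(N\setminus\{0\})$, with $N\setminus\{0\}\cong\Spin_7/P$ for the parabolic $P$ stabilising an isotropic vector, reduces $\{[N/\Spin_7]\}$ to $\{\B\Spin_7\}$ and $\{\B P\}$; the latter is accessible via a Levi decomposition $P=L\ltimes U$, using that $\{\B U\}$ is a power of $\{\AA^1\}^{-1}$. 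Substituting $\{\B\Spin_7\}=\{\Spin_7\}^{-1}$ and solving for $\{\B G_2\}$ should deliver $\{\B G_2\}=\{G_2\}^{-1}$.

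The main obstacle I foresee is making the reduction $\B\Spin_n\rightsquigarrow\B\Delta_n$ rigorous in $\kstack$: the ``universal diagonalisation'' step breaks up Weyl symmetries, and one must track the stratifications involved as well as the $\mu_2$\ddash gerbe obstructions associated with lifting $\O_n$\ddash torsors to $\Pin_n$\ddash torsors. A secondary difficulty is the final step of the $G_2$ argument, namely the explicit identification of the parabolic $P$ together with its Levi, for which the octonion-algebra model of the spin representation should furnish the concrete input needed.
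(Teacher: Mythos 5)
Your plan reverses the logical dependency of the paper, and the step you make load-bearing is exactly the one the paper identifies as inaccessible. You propose to obtain $\{\B\Spin_7\}$ and $\{\B\Spin_8\}$ by reducing to $\{\B\Delta_m\}$ (via the reduction in Section 4) and then computing $\{\B\Delta_m\}$ directly by stratifying a faithful representation of the finite group $\Delta_m$ by stabilizer type. But the paper does not compute any $\{\B\Delta_m\}$ this way; it computes $\{\B G_2\}$ first by a long iterated stratification argument, then deduces $\{\B\Spin_7\}$ (using Igusa's description of $\Spin_7$-orbits on the $8$-dimensional spin representation, with $G_2$ as the quadric stabilizer) and then $\{\B\Spin_8\}$, and only \emph{afterwards} concludes $\{\B\Delta_m\}=1$ for $m\leq 7$ as a corollary of those computations together with Theorem \ref{thm:main}. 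The authors remark explicitly that proving $\{\B\Delta_m\}=1$ directly ``turns out to be surprisingly complicated'' already for $m=2$, and indeed the whole point of Section 4 is that they expect this to \emph{fail} for large $m$. So ``one expects the answer'' is not a substitute for an argument here; without a direct computation of $\{\B\Delta_6\}$ and $\{\B\Delta_7\}$ your proposal has no base case. (A secondary slip: for a finite group $G$ the expected equality is $\{\B G\}=1$, not $\{\B G\}\cdot\{G\}=1$; since $\{\Delta_m\}=2^{m+1}$ is an integer, the latter would be false in $\kstack$.)

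Two further corrections. First, the heuristic of ``generic diagonalisation'' and ``$\mu_2$-gerbe bookkeeping'' is not how the paper reduces $\{\B\Spin_n\}$ to $\{\B\Delta_m\}$: the reduction is purely by iterating the three-piece stratification $\{0\}\sqcup C\sqcup B$ of the defining representation for the chain of groups $\Spin_n,\Pin_n,G_{n,1},\dots,G_{n,n}=\Delta_n$, using Proposition \ref{prop:nullcone} for the null-cone stabilizers and the identification of the quadric stabilizer at each stage. Second, there is no $\Spin_7$-equivariant isomorphism $V\setminus N\cong\gm\times(\Spin_7/G_2)$: the multiplication map $\gm\times Q\to V\setminus N$ is a $\mu_2$-cover, not an isomorphism, which is precisely why the paper passes to $[Q/(G\times\mu_2)]$ rather than $[Q/G]$. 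In the present instance the numerics happen to agree because $\{\B\mu_2\}=1$ and $[Q_8/(\Spin_7\times\mu_2)]\cong\B(G_2\times\mu_2)$, but the reasoning as written is incorrect and would give the wrong stabilizer in general. Your sketch of extracting $\{\B G_2\}$ from $\{\B\Spin_7\}$ via the spin representation and the parabolic stabilizer on the null-cone is otherwise sound in outline (it is the paper's Theorem \ref{prop:spin78} read backwards), but it cannot get off the ground until $\{\B\Spin_7\}$ is established by some other means.
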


We also set up a computation for $\Spin_n$ for a general $n$, reducing it to the computation of the class of the classifying stacks $\B \Delta_n$ for the sequence of finite groups $\Delta_n\subset \Pin_n$ obtained as preimage of the diagonal matrices of $\O_n$ along the projection $\Pin_n\to \O_n$ (Theorem  \ref{thm:main}) {(these are called the ``diagonal'' extraspecial $2$-groups in \cite{wood})}. This allows us to deduce (Corollary \ref{cor:main}) that $\{\B\Spin_n\}=\{\Spin_n\}^{-1}$ holds for every $n\geq 2$ if and only if $\{\B \Delta_n\}=1$ holds for every $n$ (for a finite group $G$, the ``expected class'' of $\{\B G\}$ is $1$, see \cite[Section 3]{ekedahl-finite-group}). 

Our overall approach
is the same as the method used in previous papers on this topic: we consider a linear action of $G$ on a vector space $V$, and stratify the space in locally closed pieces $X_i$ for which the class of the quotient $[X_i/G]$ can be explicitly computed (in some cases we will have to iterate this method). The equality $\{\B G\}\LL^{\dim V}=\{[V/G]\}=\sum_i \{[X_i/G]\}$ (coming from the scissor relations and the fact that $[V/G]\to \B G$ is a vector bundle of rank $n$) then allows us to also compute the class of $\B G$.

We believe it to be the case that for {$n\geq 15$} we have $\{\B\Spin_n\}\neq \{\Spin_n\}^{-1}$. This would provide the first example of a connected group $G$ for which $\{\B G\}\neq \{G\}^{-1}$, and should be at least morally related to other odd behaviour of spin groups, regarding for example essential dimension \cite{BRV} and (conjectural) failure of stable rationality of fields of invariants \cite[Conjecture 4.5]{merk}. We plan to return to this point in future work.

\subsection*{Notations and conventions}
We will work over a {field $k$ of characteristic different from $2$, and containing a square root of $-1$, that we denote by $\sqrt{-1}$.}

\subsection*{Acknowledgements}
We are happy to thank Zinovy Reichstein for several very useful conversations and comments on a first draft. We are also grateful to Daniel Bergh for some clarifications about parts of \cite{bergh-motivic-classes}, {and to Burt Totaro {and to the anonymous referee} for useful comments}.

\section{Preliminaries}

\subsection{Clifford algebras, (s)pin groups and $G_2$}\label{sec:clifford}

Let us briefly recall the definition of the Clifford algebra and the {split} (s)pin groups. For details, we refer the reader to the notes \cite{vistoli-spin} and references therein.

The Clifford algebra $C_n$ of the quadratic form $q(x_1,\hdots,x_n)=-(x_1^2+\cdots +x_n^2)$ on the vector space $V=k^n=\langle e_1,\hdots, e_n\rangle$ is the associative $k$-algebra generated by $e_1,\hdots, e_n$ with relations $e_i^2=-1$ and $e_ie_j+e_je_i=0$ for $i\neq j$. {Note that since $k$ contains a square root of $-1$, this quadratic form is equivalent to the split quadratic form $q_n$ on $k^n$ given by $q_n(x_1,\hdots, x_n)=x_1x_2+x_3x_4+\cdots +x_{n-1}x_n$ if $n$ is even and $q_n(x_1,\hdots, x_n)=x_1x_2+x_3x_4+\cdots +x_n^2$ if $n$ is odd.}
As a vector space, $C_n$ can be identified with the exterior algebra $\bigwedge^\bullet V$, but the product of $C_n$ is not the wedge product. For example, if two vectors $v,v' \in V$ are orthogonal, then they anticommute in $C_n$, i.e. $vv'+v'v=0$, but more generally, for every $v,v'\in V$ we have $vv'+v'v=2h(v,v')$, where $h$ is the symmetric bilinear form associated with $q$. In particular $v^2=q(v)$ for every $v\in V$ (and in fact this property characterizes the Clifford algebra of $q$).

On the exterior algebra $\bigwedge^\bullet V$, and hence on $C_n$, we have three $k$-linear involutions that will play a role in what follows. The first one, that we denote by $\epsilon$, is completely determined by
\begin{itemize}
\item $\epsilon(1)=1$
\item $\epsilon(v)=-v$ for $v\in V$
\item $\epsilon(a\wedge b)=\epsilon(a)\wedge \epsilon(b)$
\end{itemize}
and is also an automorphism of algebras.
The second one, denoted by $(-)^\mathrm{t}$, is completely determined by
\begin{itemize}
\item $1^\mathrm{t}=1$
\item $v^\mathrm{t}=v$ for $v\in V$
\item $(a\wedge b)^\mathrm{t}=b^\mathrm{t} \wedge a^\mathrm{t}$.
\end{itemize}
The third one is obtained by composing these two, and is denoted by $\overline{(-)}$.
These involutions respect the product of $C_n$, in the sense that for every $a,b\in C_n$ we have $\epsilon(ab)=\epsilon(a)\epsilon(b)$, $(ab)^\mathrm{t}=b^\mathrm{t}a^\mathrm{t}$ and $\overline{ab}=\overline{b}\overline{a}$.

The {split} pin group $\Pin_n$ is {a smooth affine} group scheme over $k$, whose $L$-rational points for any field extension $k\subseteq L$ consist of the subgroup of $C_n\otimes_k L$ of elements $\alpha$ such that
\begin{itemize}
\item $\alpha$ is either even or odd (for the grading of {$\bigwedge^\bullet V\otimes_k L$)},
\item $\alpha\overline{\alpha}=1$, and
\item for every {$v\in V\otimes_k L$}, the element $\alpha v \overline{\alpha}$ is also in {$V\otimes_k L$}.
\end{itemize}
The {split} spin group $\Spin_n\subset \Pin_n$ is the {sub-group scheme} of even elements.

There is a surjective homomorphism $\rho_n\colon \Pin_n \to \O_n$ with kernel $\mu_2=\{\pm 1\}$ (that restricts to $\rho_n\colon \Spin_n\to \SO_n$), defined as $\rho_n(\alpha)v=\epsilon(\alpha)v\overline{\alpha}$. If $v\in V$ is a vector of length $1$ (i.e. $\|v\|^2=1$, where $\|(x_1,\hdots, x_n)\|^2=x_1^2+\cdots+x_n^2$), then $\rho_n(v)$ is the reflection through the hyperplane orthogonal to $v$.
Every element of $\Pin_n$ can be written in $C_n$ as a product of vectors of length $1$ in $V$, and conversely any such product is an element of $\Pin_n$. The elements of $\Spin_n$ are exactly the ones of $\Pin_n$ that can be written as a product of an even number of vectors of length $1$.
The group $\Spin_n$ is a simple connected linear algebraic group, and it has the same Lie algebra as the group $\SO_n$ (of which it is the universal cover). Apart from $\Spin_1\cong \mu_2$, spin groups are special (i.e. every torsor is locally trivial for the Zariski topology) for $n\leq 6$, because of accidental isomorphisms (see for example \cite[Section 16]{Garibaldi}).

Let us also briefly recall how the algebraic group $G_2$, the smallest of the exceptional simple groups, is constructed, over the complex numbers. Recall that the octonion algebra $\mathbb{O}$ is a normed division algebra of dimension 8 as a vector space over $\bR$, constructed by applying the Cayley--Dickson construction to the algebra of quaternions $\bH$. The product is non-commutative and non-associative. The tensor product $\mathbb{O}_\bC=\mathbb{O}\otimes_\bR \bC$ is a $\bC$-algebra of dimension $8$.
The complex algebraic group $G_2$ can be defined as the group of automorphisms of $\mathbb{O}_\bC$ as a $\bC$-algebra. It has dimension $14$ and the smallest irreducible representation of rank bigger than $1$ has rank $7$, and is given by considering the natural action on the space $\Im(\mathbb{O})$ of purely imaginary octonions. 

Both the split form of $G_2$ and this $7$-dimensional representation can be defined over an arbitrary field (see for example \cite[Theorem 25.14]{involutions}).

\subsection{Representations of $\O_n$ and $\SO_n$ and stabilizers}\label{sec:reps.on}

Here we recall some facts about the orbits and stabilizers for the tautological representation of {the split algebraic groups} $\O_n$ and $\SO_n$, that will be useful in the main body.

Assume $n\geq 2$, and let us consider the tautological representation of $\O_n$ on $V=k^n$ preserving the split quadratic form $q=q_n$ defined as $q_n(x_1,\hdots, x_n)=x_1x_2+x_3x_4+\cdots+x_{n-1}x_n$ if $n$ is even and $q_n(x_1,\hdots, x_n)=x_1x_2+x_3x_4+\cdots+x_n^2$ if $n$ is odd.
Denote by $C$ the punctured null-cone $\{0\neq v\in V \mid q(v)=0\}$, by $B$ the complement {$V\setminus \overline{C}=\{v\in V \mid q(v)\neq 0\}$},  and by $Q$  the non-singular quadric $\{v\in V\mid q(v)=1\}$. Both $C$ and $B$ are unions of orbits, and moreover the action of both $\O_n$ and $\SO_n$ is transitive on $C$ and $Q$ (except for $\SO_2$), by Witt's extension theorem. In fact, orbits for the action are exactly the origin $\{0\}$, the locus $C$, the non-singular quadric $Q$, and the other orbits are all isomorphic to $Q$ via rescaling (and contained in $B$).

We will need a description of stabilizers of points on $C$ and $Q$ for these actions. The stabilizer of a point $p\in Q$ for the action of $\O_n$ is a copy of $\O_{n-1}$. Let us denote the embedding by $i\colon \O_{n-1}\subset \O_n$.

Let us analyze the stabilizer for the action of $\O_n$, say, of the element $e_1 \in C$.  Denote this stabilizer by $G$, consider the subspace $W=\langle e_3, \hdots,  e_n\rangle\subset V$, with the induced quadratic form, and let us also identify $\O(W)\cong \O_{n-2}$. Note that $\O_{n-2}$ is included in $G$ in the obvious manner, but one can also easily check that this inclusion has a section $G\to \O_{n-2}$: given an element $g \in G$ (i.e. $g(e_1)=e_1$), we can include $W\subset V$, then apply $g\colon V\to V$, and project down to $\langle e_1 \rangle^\perp/\langle e_1\rangle\cong W$. A straightforward computation shows that the resulting linear transformation $W\to W$ is in $\O_{n-2}$.

Hence we obtain a short exact sequence
$$
\xymatrix{
0\ar[r] & K\ar[r] & G\ar[r] & \O_{n-2}\ar[r] & 0
}
$$
and an easy computation (that we leave to the reader) shows that the group $K$ is isomorphic to $W$, seen as an algebraic group via its linear structure.

Precisely, the image in $G$ of an element $w\in W$ is the linear transformation $\phi_w\colon V\to V$ such that
\begin{itemize}
\item $\phi_w(e_1)=e_1$
\item $\phi_w(e_2)=-q(w)e_1+e_2+w$
\item $\phi_w(x)=x-{2}h(x,w)e_1$ for $x\in W$.
\end{itemize}
(where $h$ is the symmetric bilinear form on $V$ associated with $q$ as $h(v,v')=\frac{1}{2}(q(v+v')-q(v)-q(v'))$, so that $h(v,v)=q(v)$ for every $v\in V$).

Since $G\to \O_{n-2}$ has a section, we conclude that $G\cong \O_{n-2}\ltimes W$, where the action of $\O_{n-2}$ on $W$ is given by the identification $\O_{n-2}=\O(W)$. The argument for $\SO_n$ is completely analogous, and gives an isomorphism $G\cong \SO_{n-2}\ltimes W$.

We will also need to consider the action of $\O_n\times \mu_2$ (resp. $\SO_n\times \mu_2$) on $V$, where $\mu_2$ acts by multiplication by $-1$. The stabilizer in $\O_n\times \mu_2$ of a point of $Q$ for this action is the image of the map $i'\colon \O_{n-1}\times \mu_2 \subset \O_n\times \mu_2$ given by $i'(M,\xi)=(\xi \cdot i(M),\xi)$. The stabilizer for $\SO_n\times \mu_2$ is the inverse image in $\SO_n\times \mu_2$ of the image of $i'$, so it consists of pairs $(M,\xi)\in \O_{n-1}\times \mu_2$ such that the matrix $\xi\cdot i(M)$ is in $\SO_n$.

The determinant of $\xi\cdot i(M)$ is $(\xi)^n\cdot \det(M)$. If $n$ is even this is $\det(M)$, and we conclude that $M$ has to be in $\SO_{n-1}$ and the stabilizer is isomorphic to $\SO_{n-1}\times \mu_2$ (which, since $n-1$ is odd, is isomorphic to $\O_{n-1}$) included in $\SO_n\times \mu_2$ via the restriction $\SO_{n-1}\to \SO_n$ of the map $i$. If $n$ is odd, the determinant is $\xi\cdot \det(M)$, and then the stabilizer is given by pairs $(M,\xi)\in \O_{n-1}\times \mu_2$ such that $\xi=\det(M)$, which is isomorphic to $\O_{n-1}$ via the second projection.

\subsection{The Grothendieck ring of stacks}

We briefly recall Ekedahl's definition of the Grothendieck ring $\kstack$ of algebraic stacks over $k$, and give a list of facts about it that will be needed later. For more details we refer the reader to \cite{ekedahl-grothendieck}.

The ring $\kstack$ is generated as an abelian group by the isomorphism classes $\{X\}$ of algebraic stacks of finite type over $k$ with affine stabilizer groups, with relations $\{X\}=\{Y\}+\{X\setminus Y\}$ for $Y\subseteq X$ a closed substack, and $\{E\}=\{\bA^n\times_k X\}$ for a vector bundle $E\to X$ of rank $n$. The product is defined by fiber product over $k$, i.e. $\{X\}\cdot \{Y\}=\{X\times_k Y\}$. The relations for vector bundles are automatic in the Grothendieck ring of varieties, but they have to be imposed in this case. We will denote by $\LL$ the so-called \emph{Lefschetz motive}, the class of the affine line $\bA^1$. With this notation, if $E\to X$ is a vector bundle of rank $n$, then $\{E\}=\LL^n\{X\}$. 

By \cite[Theorem 1.2]{ekedahl-grothendieck} the ring $\kstack$ is isomorphic to the localization of the Grothendieck ring of varieties obtained by inverting $\LL$ and all elements of the form $\LL^n-1$ for $n \in \bN_+=\bN\setminus \{0\}$. In particular, products of powers of $\LL$ and cyclotomic polynomials in $\LL$ are invertible in $\kstack$.

\begin{proposition}[{\cite[Proposition 1.4]{ekedahl-grothendieck}}]
\label{prop:bg}
Let $G$ be a special algebraic group over $k$. Then for every $G$-torsor $X\to Y$ we have $\{X\}=\{G\}\cdot \{Y\}$ in $\kstack$. In particular $\{\B G\}=\{G\}^{-1}$. \qed
\end{proposition}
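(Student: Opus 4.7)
The statement reduces to the torsor formula $\{X\} = \{G\} \cdot \{Y\}$, from which the universal torsor $\Spec k \to \B G$ yields $1 = \{G\} \cdot \{\B G\}$, and hence $\{\B G\} = \{G\}^{-1}$ using that $\{G\}$ is invertible in $\kstack$ by the localization description of the ring quoted just above. The plan is to prove the torsor formula by Noetherian induction, first for $Y$ a variety, and then to extend it to the case of an arbitrary finite-type algebraic stack base.

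For $Y$ a variety, the induction is on $\dim Y$, with the empty base case trivial. Since $G$ is special, the $G$-torsor $X \to Y$ admits a trivialization on some non-empty open $U \subset Y$, giving $X|_U \cong G \times_k U$ and therefore $\{X|_U\} = \{G\} \cdot \{U\}$ by the product relation. Applying the induction hypothesis to the restricted torsor over the closed complement $Z = Y \setminus U$ (of strictly smaller dimension) yields $\{X|_Z\} = \{G\} \cdot \{Z\}$, and the scissor relation gives
\begin{equation*}
\{X\} = \{X|_U\} + \{X|_Z\} = \{G\}\bigl(\{U\} + \{Z\}\bigr) = \{G\}\cdot \{Y\}.
\end{equation*}

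The main obstacle is the passage from varieties to a general stack base, and in particular the application to $\B G$ itself, since the notion of Zariski-local triviality does not directly apply there. One option is to stratify $Y$ by locally closed substacks that are gerbes over varieties, handling each stratum via the variety case together with the standard gerbe class computations. A cleaner alternative, which bypasses stack bases entirely, is a Totaro-style approximation argument: choose a $G$-representation $V$ together with a $G$-invariant open $U \subset V$ on which $G$ acts freely and whose complement has arbitrarily large codimension. The scheme-level formula applied to the torsor $U \to U/G$ gives $\{U\} = \{G\}\cdot \{U/G\}$, and dividing by $\LL^{\dim V}$ and passing to the limit in a suitable completion of $\kstack$ recovers $\{G\}\cdot \{\B G\} = 1$ without ever invoking torsors over stacks.
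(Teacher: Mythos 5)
Your variety case is correct (phrase it as Noetherian induction on closed subsets rather than on dimension, since if $Y$ is reducible the closed complement of a nonempty open may have the same dimension). The gap is in the extension to stack bases, and neither of your two routes closes it. The Totaro-style approximation only yields $\{G\}\{\B G\}=1$ after passing to the completion $\kcomp$, whereas the statement is an identity in $\kstack$ itself; since $\kstack\to\kcomp$ is not obviously injective you cannot descend. Moreover it says nothing about the torsor formula over a general stack base $Y$, which is the actual content of the proposition and is used repeatedly in the paper (e.g.\ for $\gm$-, $\SL_2$- and $\SL_3$-torsors of classifying stacks in Section 3).

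The stratification route is the right one, but ``gerbes over varieties'' is not the correct stratification and does not let you invoke the variety case, since a $G$-torsor over a gerbe is not a scheme. Ekedahl's actual argument, which this paper quotes, stratifies $Y$ by locally closed substacks of the form $[Z_i/\GL_{n_i}]$ with $Z_i$ a scheme on which $\GL_{n_i}$ acts freely (this exists by Kresch's theorem for finite-type stacks with affine stabilizers); pulling the torsor back along $Z_i\to[Z_i/\GL_{n_i}]$ produces a $G$-torsor of schemes to which your variety case applies, and one descends using the lemma that $\{T\}=\{\GL_n\}\{S\}$ for any $\GL_n$-torsor $T\to S$ over an \emph{arbitrary stack base} $S$ (a consequence of the vector bundle relation $\{E\}=\LL^n\{S\}$, which $\kstack$ imposes for stacks, not just schemes) together with the invertibility of $\{\GL_n\}$ in $\kstack$. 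This $\GL_n$ lemma is the key missing ingredient in your sketch.
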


\begin{proposition}\label{prop:linear.rep}
Let $G$ be an algebraic group over $k$ that acts linearly on a vector space $V$ of dimension $n$ over $k$. Then in $\kstack$ we have $\{[V/G]\}=\LL^n \{\B G\}$.
\end{proposition}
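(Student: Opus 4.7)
The plan is to identify the quotient stack $[V/G]$ with the total space of a canonical vector bundle over $\B G$ and then apply the defining relation for vector bundles in the Grothendieck ring.

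First I would construct the map $\pi\colon [V/G]\to \B G$ induced by the $G$-equivariant structure map $V\to \Spec k$. Concretely, for any $k$-scheme $T$, a $T$-point of $[V/G]$ consists of a $G$-torsor $P\to T$ together with a $G$-equivariant morphism $P\to V$, while a $T$-point of $\B G$ is just the torsor $P\to T$; the morphism $\pi$ is the obvious forgetful map.

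Next I would verify that $\pi$ is a vector bundle of rank $n$. The standard way is to exhibit the fiber of $\pi$ over a torsor $P\to T$ as the associated bundle $P\times^G V$, which is a (Zariski-)locally trivial vector bundle of rank $n$ over $T$ because $V$ is a finite-dimensional linear representation of $G$ and $P\times^G V$ is the quotient of the trivial vector bundle $P\times V\to P$ by a free linear $G$-action. Equivalently, pulling back $[V/G]\to \B G$ along the smooth cover $\Spec k\to \B G$ given by the trivial torsor recovers $V\to \Spec k$ with its $G$-linearization, which is the data defining the vector bundle.

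Finally, once $\pi\colon [V/G]\to \B G$ is known to be a vector bundle of rank $n$, the vector bundle relation in $\kstack$ (built into the definition recalled just above) gives immediately
\[
\{[V/G]\}=\LL^{n}\cdot \{\B G\}.
\]
The main conceptual point — really the only nontrivial step — is the verification that $\pi$ is actually a vector bundle and not merely a stack-theoretic affine bundle; I expect this to follow formally from the definition of the associated bundle construction, so no real obstacle is anticipated.
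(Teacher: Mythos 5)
Your argument is correct and is exactly the paper's argument: the paper likewise observes that $[V/G]\to \B G$ is a vector bundle of rank $n$ and then invokes the vector bundle relation built into the definition of $\kstack$; you have simply spelled out the associated-bundle verification in more detail.
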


\begin{proof}
This follows from the construction of $\kstack$, by noting that the natural map $[V/G]\to \B G$ is a vector bundle of rank $n$.
\end{proof}

Denote by $\Phi_\LL$ the submonoid of the polynomial ring $\bZ[\LL]$ generated by (non-negative) powers of $\LL$ and cyclotomic polynomials $\LL^n-1$ for $n\in \bN_+$.
The following proposition and its corollary are used in \cite{bergh-motivic-classes} (see in particular Section 2.2).

\begin{proposition}\label{prop:equal}
Let $G$ be an affine connected algebraic group of finite type over $k$. If both classes $\{\B G\}$ and $\{G\}$ are in the subring $\Phi_\LL^{-1}\bZ[\LL]\subset \kstack$, then we have $\{\B G\}\cdot \{G\}=1$ in $\kstack$.
\end{proposition}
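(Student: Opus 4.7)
The strategy is to use a faithful linear representation of $G$ to transform the statement into a variety-level identity, and then perform a formal cancellation exploiting the domain-like structure of the subring $\Phi_\LL^{-1}\bZ[\LL]$.

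First, since $G$ is affine of finite type, fix a faithful linear representation $V$ of $G$ of some dimension $n$. By Proposition \ref{prop:linear.rep},
$$
\{[V/G]\} = \LL^n\{\B G\}
$$
in $\kstack$. Multiplying by $\{G\}$ reduces the claim to showing $\{G\}\cdot\{[V/G]\} = \LL^n = \{V\}$, since then, because $\LL^n$ is a unit in $\kstack$, we may cancel it to conclude that $\{G\}\cdot\{\B G\} = 1$.

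Next, to prove $\{V\} = \{G\}\cdot\{[V/G]\}$, the natural approach is to $G$-equivariantly stratify $V$ into locally closed pieces $V = \bigsqcup_i X_i$ in such a way that each quotient map $X_i \to [X_i/G]$ is a Zariski-locally trivial $G$-torsor (possibly after a further refinement of the base). For such torsors, Proposition \ref{prop:bg} (or rather the fact that Zariski-locally trivial $G$-bundles of fiber $G$ satisfy the multiplicativity relation in $\kstack$) gives $\{X_i\} = \{G\}\cdot\{[X_i/G]\}$ on each stratum, and the scissor relation then yields the desired identity by summing.

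The main obstacle is that for non-special $G$, one cannot in general reduce to Zariski-locally trivial torsors on every stratum: the free-action locus of $V$ carries a $G$-torsor structure over its quotient that may fail to be Zariski-locally trivial. This is where the hypothesis that both $\{G\}$ and $\{\B G\}$ lie in $R = \Phi_\LL^{-1}\bZ[\LL]$ becomes essential. The subring $R$ is the image inside $\kstack$ of a localization of the polynomial ring $\bZ[\LL]$, and inside this well-behaved setting the cancellation of $\LL^n$ can be carried out formally: from an equality of the shape $\LL^n\{G\}\{\B G\} = \LL^n$ in $R$, one concludes $\{G\}\{\B G\} = 1$ without ambiguity. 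The detailed execution of this formal cancellation, matched with the stratification argument above, follows the approach of \cite[Section 2.2]{bergh-motivic-classes}.
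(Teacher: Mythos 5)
Your proposal does not work, and the gap is a real one, not a matter of missing details. You correctly identify the obstruction: for a non-special $G$ the $G$-torsors appearing over the strata of a representation $V$ need not be Zariski-locally trivial, so the scissor relations alone will not give $\{V\} = \{G\}\cdot\{[V/G]\}$. In fact Ekedahl's theorem (cited in the introduction) says precisely that for reductive non-special $G$ there exist $G$-torsors $X\to Y$ with $\{X\}\neq\{G\}\{Y\}$, so this identity is not a free stratification consequence; establishing it for the universal torsor is exactly the content of the proposition. Having flagged the obstruction, your proposal does not actually overcome it. The ``formal cancellation'' step you invoke — cancelling $\LL^n$ in an equality $\LL^n\{G\}\{\B G\} = \LL^n$ — is trivial in $\kstack$ already, since $\LL$ is a unit there; it does not require and does not use the hypothesis that $\{G\}$ and $\{\B G\}$ lie in $\Phi_\LL^{-1}\bZ[\LL]$. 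The genuinely hard step is establishing the equality $\LL^n\{G\}\{\B G\} = \LL^n$ in the first place, and that is left unaddressed.

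The paper's proof goes a completely different way. It uses the ``Euler characteristic'' ring homomorphism $\chi_c\colon \kstack\to \widehat{\mathrm{K}_0}(\mathrm{Coh}_k)$ of Ekedahl, together with two facts: first, $\chi_c$ factors through the quotient ring $\mathrm{K}_0^{\mathcal{G}}(\mathrm{Stack}_k)$ in which multiplicativity for $G$-torsors is imposed for \emph{every} connected $G$, which immediately gives $\chi_c(\{\B G\}\cdot\{G\}) = 1$ without any stratification; and second, $\chi_c$ restricted to the subring $\Phi_\LL^{-1}\bZ[\LL]$ is injective (this is \cite[Lemma 3.5]{ekedahl-finite-group}), so that the hypothesis that both classes, hence their product, lie in this subring lets one pull the identity back to $\kstack$. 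The role of the subring hypothesis is thus precisely to make the realization map invertible on the element in question — a mechanism your proposal never articulates. Without invoking some realization functor that (a) kills the discrepancy between torsor triviality classes and (b) is faithful on $\Phi_\LL^{-1}\bZ[\LL]$, the stratification route cannot close the gap.
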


\begin{proof}
Consider the ``Euler characteristic'' ring homomorphism $\chi_c\colon \kstack\to \widehat{\mathrm{K_0}}(\mathrm{Coh}_k)$ constructed in \cite[Section 2]{ekedahl-grothendieck}, where recall that $\mathrm{K_0}(\mathrm{Coh}_k)$ is a certain Grothendieck ring of mixed Galois representations (or Hodge structures if $k=\bR$ or $\bC$). Since, as explained in [loc. cit.], this factors through a map $\mathrm{K_0^\mathcal{G}}(\mathrm{Stack}_k)\to \widehat{\mathrm{K_0}}(\mathrm{Coh}_k)$ (where in $\mathrm{K_0^\mathcal{G}}(\mathrm{Stack}_k)$ we also impose that $\{X\}=\{G\}\cdot \{Y\}$ for every connected algebraic group $G$ over $k$ and every $G$-torsor $X\to Y$), we deduce that $\chi_c(\{\B G\}\cdot \{G\})=1$ in $ \widehat{\mathrm{K_0}}(\mathrm{Coh}_k)$. The conclusion now follows from \cite[Lemma 3.5]{ekedahl-finite-group}.
\end{proof}

\begin{corollary}\label{cor:rat.function}
Let $G$ be an affine connected algebraic group of finite type over $k$, such that $\{G\}$ is in the subring $\Phi_\LL^{-1}\bZ[\LL]\subset \kstack$.
Then $\{\B G\}=\{G\}^{-1}$ if and only if $\{\B G\}\in \Phi_\LL^{-1}\bZ[\LL]$.  \qed
\end{corollary}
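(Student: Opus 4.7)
The plan is to read this off directly from Proposition \ref{prop:equal}, treating the two directions of the ``if and only if'' separately.

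For the $(\Leftarrow)$ direction, assume $\{\B G\}\in \Phi_\LL^{-1}\bZ[\LL]$. Since by hypothesis $\{G\}$ also lies in this subring, both classes satisfy the assumptions of Proposition \ref{prop:equal}, so we obtain $\{\B G\}\cdot \{G\}=1$ in $\kstack$. This is precisely the equality $\{\B G\}=\{G\}^{-1}$.

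For the $(\Rightarrow)$ direction, assume $\{\B G\}=\{G\}^{-1}$ in $\kstack$. I want to deduce $\{\B G\}\in \Phi_\LL^{-1}\bZ[\LL]$, which amounts to showing that $\{G\}$ is already a unit in the subring $\Phi_\LL^{-1}\bZ[\LL]$ (and not merely in the larger ring $\kstack$). The natural way to do this is to invoke the fact, valid for connected reductive $G$, that $\{G\}$ can be written explicitly as a product of a power of $\LL$ and factors of the form $\LL^{d_i}-1$ (coming, e.g., from a Bruhat-type decomposition), so $\{G\}$ is a product of generators of $\Phi_\LL$. Its inverse $\{G\}^{-1}$ then visibly lies in $\Phi_\LL^{-1}\bZ[\LL]$, and the equation $\{\B G\}=\{G\}^{-1}$ forces $\{\B G\}$ to lie there as well.

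The only nontrivial input is Proposition \ref{prop:equal} itself; everything else is formal. The one place where care is needed is the $(\Rightarrow)$ direction: one must justify that inverting $\{G\}$ does not leave the subring $\Phi_\LL^{-1}\bZ[\LL]$, and this uses the structural form of $\{G\}$ for connected reductive groups rather than the abstract hypothesis alone. No new calculation is required, and the proof should fit in a few lines.
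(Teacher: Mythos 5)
Your $(\Leftarrow)$ direction is exactly the paper's intent: Proposition \ref{prop:equal} applies once both $\{G\}$ and $\{\B G\}$ lie in $\Phi_\LL^{-1}\bZ[\LL]$, and the paper records the corollary without further proof precisely because this implication is immediate. You are right that this is the ``content'' direction.

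For $(\Rightarrow)$, you have put your finger on a real subtlety, but your fix is narrower than it needs to be. The stated hypothesis is only that $\{G\}$ lies in the subring $\Phi_\LL^{-1}\bZ[\LL]$; for $\{G\}^{-1}$ to lie there as well one needs $\{G\}$ to be a \emph{unit} of $\Phi_\LL^{-1}\bZ[\LL]$, which does not follow from membership alone (e.g.\ $\LL+2$ is in the subring but not a unit there). You resolve this by appealing to the explicit product form of $\{G\}$ for split reductive groups, which does make $\{G\}$ a product of generators of $\Phi_\LL$ and hence a unit. However, the corollary is stated for an arbitrary affine connected $G$, and the paper's own remark immediately after the corollary indicates that the relevant hypothesis is also verified for some non-reductive groups (connected groups with split Levi quotient in characteristic zero, where $\{G\}=\LL^{\dim U}\{L\}$). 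In every case the paper has in mind, $\{G\}$ is not merely in $\Phi_\LL^{-1}\bZ[\LL]$ but is actually a product of a power of $\LL$ and factors $\LL^{d_i}-1$, i.e.\ lies in the multiplicative monoid $\Phi_\LL$ itself; that is what makes $\{G\}^{-1}\in\Phi_\LL^{-1}\bZ[\LL]$ immediate. So the cleaner observation is not ``use reductivity'' but ``the hypothesis, as actually verified in practice, places $\{G\}$ in $\Phi_\LL$''. Tying the $(\Rightarrow)$ direction to reductivity would make the corollary false-looking for the non-reductive cases the remark explicitly covers, so this framing should be adjusted. With that small correction your argument is complete and agrees with the paper's (unstated) reasoning.
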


\begin{remark}
The assumption about $\{G\}$ in the previous statement is satisfied for $G$ split semisimple, see \cite[Proposition 2.1]{behrend-dhillon}. {The same is true for more general groups: if $G$ is split reductive, then there is a split torus $T$ and a $T$-principal bundle $G\to G^{\mathrm{ss}}$ where $G^{\mathrm{ss}}$ is split semisimple, hence we can again conclude that $\{G\} \in \Phi_\LL^{-1}\bZ[\LL]$. Moreover, this is also true  if $G$ is connected with split Levi quotient, and the characteristic of $k$ is zero: in that case, if $U\subseteq G$ denotes the unipotent radical, then $U$ is special (being an iterated extension of copies of $\bG_a$) and we have a $U$-principal bundle $G\to L$ where $L$ is a split reductive group.

For non-split groups, the situation is more complicated (already for tori, see \cite{rok}).
}
\end{remark}

\begin{proposition}[{\cite[Proposition 2.2]{talpo-vistoli}}]
\label{prop:linear}
Let $G$ be an affine algebraic group over $k$ acting linearly on a $d$-dimensional vector space $V$, that we also see as a group scheme via addition. Then we have
$$
\{\B(V\rtimes G)\}=\LL^{-d}\{\B G\}
$$
in $\kstack$.\qed
\end{proposition}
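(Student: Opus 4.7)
The plan is to exploit an auxiliary affine action of $H := V \rtimes G$ on $V$, identify the resulting quotient stack $[V/H]$ with $\B G$, and recognize the projection $[V/H] \to \B H$ as a torsor for a rank $d$ vector bundle.

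First I would consider the action $H \times V \to V$ defined by $(v, g) \cdot x = v + g \cdot x$. The normal subgroup $V \subset H$ acts simply transitively on $V$ via translations, and the stabilizer of $0 \in V$ under the full $H$-action is the subgroup $\{(0, g) \mid g \in G\} \cong G$. Hence $[V/H] \cong \B G$. Next I would analyze the natural morphism $p \colon [V/H] \to \B H$ induced by the $H$-equivariant collapse $V \to \Spec k$. For any $H$-torsor $Q \to S$, the base change $[V/H] \times_{\B H} S$ is the associated bundle $V \times^{H} Q$; using the transitivity above, together with the identification $V \cong H/G$ of left $H$-spaces, this is canonically isomorphic to $Q/G$, the quotient of $Q$ by the right $G$-action through $G \hookrightarrow H$. \'Etale-locally on $S$ one has $Q \cong H \times S$ and hence $Q/G \cong V \times S$; more precisely, $Q/G \to S$ is a torsor for the rank $d$ vector bundle $V \times^{G} (Q/V) \to S$ associated with the $G$-torsor $Q/V$ and the linear $G$-representation $V$, the torsor structure coming from the right action of $V$ on $Q$ suitably twisted to account for the non-centrality of $V$ in $H$. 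Globally, therefore, $p$ is a torsor for the rank $d$ vector bundle $\cV$ on $\B H$ obtained by pulling back $[V/G] \to \B G$ along the projection $\B H \to \B G$.

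This would yield $\{[V/H]\} = \LL^d \{\B H\}$ in $\kstack$, and combining with $[V/H] \cong \B G$ would give $\{\B G\} = \LL^d \{\B H\}$, whence $\{\B H\} = \LL^{-d} \{\B G\}$, which is the desired formula. The main delicate point will be the passage from the local picture (a torsor for a rank $d$ vector bundle on a scheme has class $\LL^d$ times the base, since such a torsor is Zariski-locally trivial by vanishing of $H^1_{\mathrm{Zar}}$ of a quasi-coherent sheaf on an affine scheme, so the scissor and vector bundle relations close the argument) to the corresponding statement about $p$ over the stack $\B H$; this should follow by smooth descent to an atlas, together with the relations built into the definition of $\kstack$.
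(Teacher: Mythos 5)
Your geometric setup is correct and is essentially the standard one: with $H = V\rtimes G$ acting affinely on $V$, the orbit of $0$ is all of $V$ with stabilizer $G$, so $[V/H]\cong \B G$, and the structure map $\B G\cong [V/H]\to \B H$ is a torsor under the rank~$d$ vector bundle $[V/G]\times_{\B G}\B H\to \B H$ (which is $[V_{\mathrm{lin}}/H]$, with $H$ acting through $G$). The gap is in the very last step. "Smooth descent to an atlas" is not a relation available in $\kstack$: the scissor relation is for closed substacks, and the vector bundle relation is an axiom imposed for actual vector bundles over stacks. There is no built-in mechanism that transports $\{E_S\}=\LL^d\{S\}$ from a smooth atlas $S\to \B H$ down to $\{E\}=\LL^d\{\B H\}$ — indeed, even the vector bundle relation itself has to be imposed over stacks rather than deduced, precisely because this kind of descent fails. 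So the claim "a torsor under a rank $d$ vector bundle on a stack has class $\LL^d$ times the base", while true, needs a genuine argument, and the one you sketch does not give it.

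Two clean ways to close the gap. First, linearize: define a linear $H$-action on $V\oplus k$ by $(v,g)\cdot(x,t)=(gx+tv,\,t)$; the hyperplane $t=0$ carries the linear action through $G$, while $\{t\neq 0\}$ is $H$-equivariantly $\gm\times V_{\mathrm{aff}}$ (trivial action on $\gm$), so
$$
\LL^{d+1}\{\B H\} \;=\; \LL^d\{\B H\} \;+\; (\LL-1)\{[V_{\mathrm{aff}}/H]\} \;=\; \LL^d\{\B H\} \;+\; (\LL-1)\{\B G\},
$$
and cancelling $(\LL-1)$ gives $\{\B G\}=\LL^d\{\B H\}$. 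Second, use special groups: the affine bundle $\B G\to \B H$ is associated to a torsor $P\to \B H$ under the affine group $\GG_a^{\,d}\rtimes\GL_d$, which is special (an extension of special groups is special), so Proposition~\ref{prop:bg} gives $\{P\}=\LL^d\{\GL_d\}\{\B H\}$; on the other hand $P\to \B G$ is a $\GL_d$-torsor, so $\{P\}=\{\GL_d\}\{\B G\}$, and $\{\GL_d\}$ is invertible. Either of these would complete your argument.
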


\begin{proposition}[{\cite[Corollary 3.9]{ekedahl-finite-group}}]
\label{prop:unipotent}
Let $U$ be a unipotent algebraic group over $k$ of dimension $d$. Then we have
$$
\{\B U\}=\LL^{-d}
$$
in $\kstack$.\qed
\end{proposition}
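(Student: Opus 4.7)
My plan is to exploit the fact that $U$ is a special algebraic group. Recall that $\bG_a$ is special (any $\bG_a$-torsor over a scheme is Zariski-locally trivial, since $\bG_a$-torsors over an affine scheme are controlled by $H^1$ of its structure sheaf, which vanishes), and extensions of special groups by special groups are themselves special. Under the hypothesis that $U$ is smooth connected and admits a filtration by normal subgroups with successive quotients isomorphic to $\bG_a$ -- which is automatic in characteristic zero -- it follows that $U$ is itself special. Proposition \ref{prop:bg} then gives $\{\B U\} = \{U\}^{-1}$.

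To conclude, I would show $\{U\} = \LL^d$ by induction on $d$. Fixing a short exact sequence
\[
1 \to \bG_a \to U \to G \to 1
\]
with $G$ smooth connected unipotent of dimension $d-1$, the projection $U \to G$ is a $\bG_a$-torsor, hence Zariski-locally trivial. Using the scissor relations and the vector-bundle relation over a trivialising affine cover of $G$, one obtains $\{U\} = \LL \cdot \{G\}$, and the induction (with trivial base case $d = 0$) yields $\{U\} = \LL^d$. Combining, $\{\B U\} = \LL^{-d}$.

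An alternative route, which avoids invoking speciality of $U$ itself, is to prove directly that $\{\B U\} = \LL^{-1}\{\B G\}$ inductively. When the extension above splits this is immediate from Proposition \ref{prop:linear} (the action of $G$ on a central $\bG_a$ is trivial); in general, one would argue that $\B U \to \B G$ is, Zariski-locally on $\B G$, isomorphic to a product with $\B\bG_a$, which reduces once again to the vanishing of coherent $H^1$ on affines. The main obstacle in either route is therefore the structural input: producing the filtration of $U$ by $\bG_a$-subquotients is clean in characteristic zero (where the exponential identifies $U$ with $\AA^d$ as a variety), but subtle in positive characteristic, where $k$-wound forms of $\bG_a$ exist and can fail to be special. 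In the concrete applications of the proposition within the present paper, however, the unipotent groups that appear are honest vector groups, so this subtlety does not intervene.
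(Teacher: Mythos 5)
The paper gives no proof of this proposition; it is stated with \texttt{\textbackslash qed} and attributed to Ekedahl's \emph{A geometric invariant of a finite group}, Corollary 3.9, so there is no argument in the present text to compare against. What you have written is therefore your own proof, and it is a correct and natural one for the class of unipotent groups that actually occur in the paper.

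Both of your routes are sound. The main route (speciality of $U$, then $\{U\}=\LL^d$) relies on: $\GG_a$ is special because $\GG_a$-torsors over an affine base are classified by $H^1(X,\cO_X)=0$; speciality is closed under extensions; and the class of the total space of a torsor under a special group is the product of the classes (Proposition~\ref{prop:bg}). All of this is standard and correctly deployed. The alternative route through $\{\B U\}=\LL^{-1}\{\B G\}$ is also fine, though slightly more delicate to justify than you indicate: $\B U\to \B G$ is not literally a $\GG_a$-torsor but rather has fibres $\B\GG_a$, and the cleanest way to get the class relation is to write $\B U = [\spec k / U]$ and stratify a linear $U$-representation, or simply to notice that $[G/U]\to\B U$ is a $G$-torsor and $[G/U]\cong \B\GG_a$; in practice it is easier to stick with the first route.

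You correctly identify the only genuine gap: the argument needs $U$ to be \emph{split} unipotent, i.e.\ to admit a filtration by normal subgroups with successive quotients $\GG_a$. This is automatic over any \emph{perfect} field (not only in characteristic zero, which is a slightly weaker statement than you give), but over an imperfect field there are $k$-wound forms of $\GG_a$ to which the induction does not apply, and the proposition as stated does not make a splitness hypothesis explicit. This matches the scope of Ekedahl's original result, and as you note it is harmless here since every unipotent group appearing in the paper (unipotent radicals of parabolics, groups of strictly upper triangular matrices, vector groups $W$) is visibly split. One small correction: not all of the unipotent groups used in the paper are vector groups -- for instance the $5$-dimensional $H$ inside the parabolic of $G_2$ and the group $U$ of upper unitriangular matrices in $\SL_3$ are nonabelian -- but they are all split, which is what your argument actually requires.
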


\begin{proposition}\label{prop:formulas}
In $\kstack$ we have the following formulas.
$$
\{G_2\}=\LL^{14}(1-\LL^{-2})(1-\LL^{-6})=\LL^6(\LL^2-1)(\LL^6-1)
$$
$$
\{\Spin_n\}= \begin{cases}\displaystyle
     \LL^{m^2- m}(\LL^m - 1)\prod_{i=1}^{m-1} (\LL^{2i}-1)
 &\text{if }\/n = 2m
   \vspace{3pt}\\
   \displaystyle
   \LL^{m^{2}} \prod_{i = 1}^{m}(\LL^{2i} - 1)&\text{if }\/n = 2m + 1
   \end{cases}
$$
\end{proposition}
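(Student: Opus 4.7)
The plan is to apply the standard Bruhat decomposition formula for the motivic class of a split semisimple algebraic group, and then to specialize the numerical data to $G_2$ and to $\Spin_n$ for $n = 2m+1$ and $n = 2m$.

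First I would establish the general formula
\[
\{H\} = \LL^{\dim U}\prod_{i=1}^r (\LL^{d_i}-1) \in \kstack
\]
for a split semisimple group $H$ over $k$ of rank $r$, where $d_1,\ldots,d_r$ are the fundamental degrees of the Weyl group $W$ of $H$ and $U$ is the unipotent radical of a Borel subgroup $B = T \ltimes U$. The derivation proceeds in three steps: (i) the split torus $T \cong \gm^r$ and the unipotent group $U$ are both special (the latter by Proposition \ref{prop:unipotent}, or directly because it is an iterated extension of copies of $\ga$), hence so is $B$, and the $B$-torsor $H \to H/B$ gives $\{H\} = \{B\}\{H/B\} = (\LL-1)^r \LL^{\dim U}\{H/B\}$; (ii) the Bruhat decomposition writes the flag variety as a disjoint union of affine Schubert cells $BwB/B \cong \bA^{\ell(w)}$ indexed by $w \in W$, so $\{H/B\} = \sum_{w \in W} \LL^{\ell(w)}$; (iii) Solomon's formula for the Poincar\'e polynomial of $W$ identifies this sum with $\prod_{i=1}^r (\LL^{d_i}-1)/(\LL-1)$, and cancelling against the $(\LL-1)^r$ factor yields the claimed product.

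To conclude, I would plug in the standard numerical data. For $G_2$ (rank $2$, $\dim = 14$) the fundamental degrees are $(2,6)$ and $\dim U = 6$, giving the first formula immediately. For $\Spin_{2m+1}$ (type $B_m$) the degrees are $(2,4,\ldots,2m)$ and $\dim U = m^2$; for $\Spin_{2m}$ (type $D_m$) the degrees are $(2,4,\ldots,2(m-1),m)$ and $\dim U = m^2-m$. Since $\Spin_n$ and $\SO_n$ share the same root system, Weyl group, dimension, and Borel (the central $\mmu_2$ sits inside the maximal torus), the formula for $\Spin_n$ follows from the general one without further work. I do not anticipate any serious obstacle: the three ingredients above are classical, and as an alternative one could simply quote the relevant cases of \cite[Proposition 2.1]{behrend-dhillon}, where the class formulas are tabulated for all split simple groups. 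The only care required is to use the correct fundamental degrees for each type and to recall the identification of $\dim U$ with the number of positive roots $(\dim H - r)/2$.
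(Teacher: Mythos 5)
Your proposal is correct and matches the paper's approach: the paper simply invokes \cite[Proposition 2.1]{behrend-dhillon} and plugs in the fundamental degrees and dimensions, which is exactly the alternative you mention at the end, and your explicit Bruhat/Solomon derivation is just an unpacking of that cited result. The numerical data you use (degrees $(2,6)$ for $G_2$, $(2,4,\ldots,2m)$ for type $B_m$, $(2,4,\ldots,2m-2,m)$ for type $D_m$, together with $\dim U = (\dim H - r)/2$) agrees with the paper's.
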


\begin{proof}
These follow immediately from \cite[Proposition 2.1]{behrend-dhillon}. For $G_2$ we have $\dim G=14$, $d_1=2$ and $d_2=6$, and for $\Spin_n$, we have $\dim G=n(n-1)/2$ and the integers $d_i$ are $\{2, 4, \hdots, 2m\}$ if $n=2m+1$ and $\{2, 4, \hdots, 2m-2, m\}$ if $n=2m$ (note that $\Spin_n$ and $\SO_n$ share the same Lie algebra).  
\end{proof}

\section{The class of $\B G$ for $G=G_2, \Spin_7, \Spin_8$}\label{sec:g2}

In this section we will compute the class of the classifying stack $\B G$ in the Grothendieck ring $\kstack$ for $G=G_2, \Spin_7, \Spin_8$, and check that in each case it is equal to $\{G\}^{-1}$.

\subsection{General setup}\label{sec:setup}

Let us explain the basic strategy that we will employ several times, in this section and the next one, to obtain information on the class $\{\B G\}$ for a group $G$ acting on an $n$-dimensional vector space $V$ via a homomorphism $G\to \O_n$ (resp. $G\to \SO_n$).

Let us stratify the space $V$ by considering the origin $\{0\}$, the locus $C=\{0\neq v\in V\mid q(v)=0\}$ and the complement $B=V\setminus \overline{C}$, as in (\ref{sec:reps.on}). By Proposition \ref{prop:linear.rep}, we have that $\{[V/G]\}=\LL^n\{\B G\}$. On the other hand, since the loci $\{0\}, C, B$ are clearly $G$-invariant, we obtain
$$
\{[V/G]\}=\{[\{0\}/G]\}+\{[C/G]\}+\{[B/G]\}.
$$
Moreover, noting that $\{[\{0\}/G]\}=\{\B G\}$, we can rewrite the resulting equality as
$$
\{\B G\}(\LL^n-1)=\{[C/G]\}+\{[B/G]\}.
$$
Now assume that the action of $G$ on $C$ and on $Q$ is transitive. This will be the case for example if $G\to \O_n$ (resp. $G\to \SO_n$) is surjective.

Call $G'$ the stabilizer of a point of $C$. Then we have 
$$
\{\B G\} (\LL^n-1)=\{\B G'\}+\{[B/G]\}.
$$
In order to deal with the second term, we employ the following construction (that was first used in \cite{molina-vistoli-classifying}, and subsequently for example in \cite{tesi-molina}, \cite{guillot} and \cite{talpo-vistoli}). Let $Q$ be the closed locus of points $v\in V$ such that $q(v)=1$. We can consider the product $\gm\times Q$ and the natural multiplication map $\gm\times Q \to B$. There is an action of $\mu_2$ on the total space given by $\xi\cdot (\lambda, v)=(\xi\lambda, \xi v)$, and for this action we have $(\gm\times Q)/\mu_2\cong B$. Moreover $G$ acts on $\gm\times Q $ (trivially on $\gm$) and this action commutes with the action of $\mu_2$. We obtain an action of $G\times \mu_2$ on $\gm\times Q $, and an isomorphism
$$
[B/G]\cong [(\gm\times Q )/(G\times \mu_2)].
$$
Thus we get 
$$
\{[B/G]\}=\{ [(\gm\times Q)/ (G\times \mu_2) ]\} = (\LL-1) \{ [Q/(G\times \mu_2)]\}
$$
since $[(\gm\times Q)/ (G\times \mu_2) ]\to [Q/(G\times \mu_2)]$ is a $\gm$-torsor.

The action of $G\times \mu_2$ on $Q$ is transitive. Call $G''\subset G\times \mu_2$ the stabilizer of a point of $Q$ for this action, so that $ [Q/(G\times \mu_2)]\cong \B G''$.
Overall we obtain
$$
\{\B G\}(\LL^n-1)=\{\B G'\}+(\LL-1)\{ \B G''\}
$$
and, since $\LL^n-1$ is invertible in $\kstack$, this formula allows us to compute $\{\B G\}$ whenever we understand $\{\B G'\}$ and $\{\B G''\}$.

If the action of $G$ is transitive only on $C$ or only on $Q$, we will use the corresponding part of the preceding arguments.

\subsection{The class of $\B G_2$}

\begin{theorem}\label{prop:g2}
We have $\{\B G_2\}=\{G_2\}^{-1}$ in $\kstack$.
\end{theorem}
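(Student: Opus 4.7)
I plan to apply the general stratification from Section~\ref{sec:setup} to $G_2$ acting on its $7$-dimensional irreducible representation $V$ (the purely imaginary octonions, with the norm quadratic form~$q$). The split form of $G_2$ preserves~$q$ and acts transitively on both the punctured null cone~$C$ and the unit quadric~$Q$ in~$V$: this follows from a dimension count combined with the connectedness of $G_2$, since the stabilizer of any point in~$C$ (resp.\ of a point in $Q$) turns out to be of the expected dimension~$8$ (resp.~$8$), so the orbits are open in these connected subvarieties and must coincide with them. The master equation of Section~\ref{sec:setup} then reads
\[
\{\B G_2\}(\LL^{7}-1) \;=\; \{\B G'\} \;+\; (\LL-1)\{\B G''\},
\]
with $G'\subset G_2$ the stabilizer of a null vector and $G''\subset G_2\times\mu_2$ the stabilizer of a unit vector.

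Next I would compute $\{\B G'\}$. The stabilizer $G'$ sits inside a maximal parabolic $P\cong \GL_2\ltimes U$ of $G_2$ (with $U$ a $5$-dimensional unipotent radical) as the kernel of the character of $P$ acting on the highest weight line, and one checks it is the connected group $\SL_2\ltimes U$. Since $\SL_2$ is special, Proposition~\ref{prop:bg} gives $\{\B\SL_2\}=\{\SL_2\}^{-1}=\frac{1}{\LL(\LL^{2}-1)}$. To handle the unipotent part, I would pick an $\SL_2$-equivariant filtration of $U$ by normal subgroups with vector-group (additive) successive quotients and iterate Proposition~\ref{prop:linear}, obtaining
\[
\{\B G'\} \;=\; \LL^{-5}\{\B\SL_2\} \;=\; \frac{1}{\LL^{6}(\LL^{2}-1)}.
\]

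The delicate computation is that of $\{\B G''\}$. The stabilizer in $G_2$ of a unit vector $p\in Q$ is the well-known $\SL_3\subset G_2$, and the normalizer of this $\SL_3$ in $G_2$ is $\SL_3\rtimes\mu_2$ with $\mu_2$ acting by the outer involution (in particular, swapping $p$ with $-p$); choosing a lift of the nontrivial element of $\mu_2$ as an involution $\sigma\in G_2$ sending $p$ to $-p$ splits the extension and identifies
\[
G'' \;\cong\; \SL_3\rtimes\mu_2.
\]
Since $G''$ is neither connected nor special, its class is not directly accessible via Proposition~\ref{prop:bg} or Corollary~\ref{cor:rat.function}, and computing it is the main obstacle. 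My plan is to iterate the strategy of Section~\ref{sec:setup} with $G''$ acting on a convenient representation --- a natural candidate is $W = k^{3}\oplus(k^{3})^{*}$, where $\SL_3$ acts by its standard plus dual representation and $\mu_2$ exchanges the two summands, and which carries a $G''$-invariant non-degenerate quadratic form coming from the canonical pairing. A stratification of $W$ by $G''$-orbits will express $\{\B G''\}$ in terms of classes of smaller stabilizers, each either special or an extension of a special group by a unipotent, so each is computable via Propositions~\ref{prop:bg}, \ref{prop:linear}, and~\ref{prop:unipotent}.

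Finally, substituting the computed $\{\B G'\}$ and $\{\B G''\}$ into the master equation and simplifying produces a closed-form expression for $\{\B G_2\}$, which is then verified to coincide with $\{G_2\}^{-1} = [\LL^{6}(\LL^{2}-1)(\LL^{6}-1)]^{-1}$ from Proposition~\ref{prop:formulas}. The hard part will be the second-level stratification of $W$ for $G''$: the non-trivial $\mu_2$-swap mixes the two $\SL_3$-summands, so the orbit decomposition and the stabilizers are no longer those of a plain split reductive action, and the extra $\mu_2$-twist must be tracked carefully through every stabilizer that appears.
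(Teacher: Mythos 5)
Your top-level strategy coincides with the paper's: stratify $k^7$ by the null cone and the quadric, identify the two stabilizers $G'$ and $G''$, and solve for $\{\B G_2\}$. Your computation of $\{\B G'\}=\LL^{-6}(\LL^2-1)^{-1}$ agrees with the paper (the paper uses the $\gm$-torsor $\B G'\to \B P$ and the $\GL_2$-torsor $\B H\to\B P$ rather than identifying $G'$ as $\SL_2\ltimes U$, which sidesteps the need to pin down which power of the determinant cuts out $G'$ inside $P$). You also correctly identify $G''\cong \SL_3\rtimes\mu_2$ with $\mu_2$ acting by the outer automorphism, and your choice of $W=k^3\oplus(k^3)^*$ with the swap action is exactly the paper's $V\oplus V$ with $A\cdot(v,w)=(Av,(A^{\rT})^{-1}w)$.

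The gap is in your treatment of the second-level stratification. You assert that the stabilizers appearing when you stratify $W$ by $G''$-orbits ``are each either special or an extension of a special group by a unipotent, so each is computable via Propositions~\ref{prop:bg}, \ref{prop:linear}, and~\ref{prop:unipotent}.'' This is false, and it is precisely where the real work of the proof lies. Stratifying $W$ for the $\SL_3\rtimes\mu_2$-action produces, among others, the stabilizer $U\rtimes\mu_2$ on the open part of the null locus (where $U$ is the three-dimensional group of unipotent upper-triangular matrices in $\SL_3$) and the stabilizer $H\cong\SL_2\rtimes(\mu_2\times\mu_2)$ on the quadric. Neither is special, and neither is a vector group extension: $U$ is non-abelian (a Heisenberg group), so Proposition~\ref{prop:linear} does not apply to $U\rtimes\mu_2$, and $\SL_2\rtimes\mu_2^2$ involves nontrivial finite twists of a reductive group. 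Computing $\{\B(U\rtimes\mu_2)\}$ requires a separate stratification of $W$ (the paper needs a ten-stratum decomposition, and a gerbe-triviality argument for several strata, to get $\{\B(U\rtimes\mu_2)\}=\LL^{-3}$); computing $\{\B H\}$ requires yet another stratification of a four-dimensional representation, with a non-transitive null locus that must itself be subdivided, and uses $\{\B K\}=1$ for a group $K$ of order $8$. Until these classes are actually computed, you cannot close the loop, so the proposal as written does not establish the theorem.
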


We will prove this in several steps, by applying the strategy outlined in (\ref{sec:setup}) a few times.

We start by considering the representation of $G_2$ on $k^7$ via $\O(7)$ (for the standard quadratic form $q(x_1,\hdots,x_7)=\sum_{i=1}^{7} x_i^2$). Over the complex numbers, this is given by identifying $\bC^7$ with the space $\Im(\mathbb{O})$ of purely imaginary octonions.

\begin{lemma}\label{lemma:group.g2}
We have
\begin{equation}
\label{eq:1}
\lbrace \B G_2 \rbrace = (\LL^7-1)^{-1}\Big (\LL^{-6}(\LL^2-1)^{-1}+(\LL-1)\{ \B G \}\Big )
\end{equation}
in $\kstack$, where $G\subset G_2\times \mu_2$ is the stabilizer of $e_1 \in k^7$ (where the action of $\mu_2$ is by scaling). 
\end{lemma}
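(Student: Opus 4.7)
The plan is to apply the general framework of Section~\ref{sec:setup} to the natural $7$-dimensional representation $G_2 \to \SO_7$ preserving a non-degenerate quadratic form $q$ (this representation exists over any field by \cite[Theorem 25.14]{involutions}).

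First I would verify the two transitivity assumptions of Section~\ref{sec:setup}: that $G_2$ acts transitively on the punctured null cone $C = \{0\neq v\mid q(v)=0\}$, and that $G_2\times \mu_2$ acts transitively on the unit quadric $Q = \{v\mid q(v)=1\}$. Both are classical over $\bC$ (the latter equivalent to $G_2$ acting transitively on the $6$-sphere in $\Im(\mathbb{O})$), and can be checked over any field by a standard dimension count combined with irreducibility and connectedness: the stabilizer in $G_2$ of a point of $Q$ is (a form of) $\SL_3$ of dimension $8$, matching $\dim G_2 - \dim Q = 14 - 6$, and the stabilizer in $G_2$ of a point of $C$ is likewise $8$-dimensional, matching $\dim C = 6$. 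Transitivity of $G_2\times\mu_2$ on $Q$ follows immediately from that of $G_2$ on $Q$.

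The main computation is then that of $\{\B G'\}$, where $G'$ is the stabilizer in $G_2$ of a null vector. Taking this vector to be a highest weight vector of the $7$-dimensional representation and analyzing the corresponding root spaces of $G_2$ yields $G' \cong U \rtimes \SL_2$, where $U$ is the unipotent radical of the maximal parabolic $P_1 \subseteq G_2$ corresponding to the short simple root, and $\SL_2$ is the derived subgroup of its Levi $\GL_2$. Here $U$ is a connected unipotent group of dimension $5$, with underlying variety $\bA^5$. Since both $U$ and $\SL_2$ are special, and the semidirect product is a split extension, $G'$ is also special; Proposition~\ref{prop:bg} then gives
\[
\{\B G'\} = \{G'\}^{-1} = \bigl(\{U\}\cdot\{\SL_2\}\bigr)^{-1} = \bigl(\LL^5 \cdot \LL(\LL^2-1)\bigr)^{-1} = \LL^{-6}(\LL^2-1)^{-1}.
\]

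Finally, applying the general formula from Section~\ref{sec:setup},
\[
\{\B G_2\}(\LL^7 - 1) = \{\B G'\} + (\LL-1)\{\B G\},
\]
(with $G$ playing the role of the stabilizer in $G_2 \times \mu_2$ of a point of $Q$), substituting the computed value of $\{\B G'\}$ and dividing through by $\LL^7 - 1$ yields \eqref{eq:1}.

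The main obstacle is the concrete identification of $G'$ as $U \rtimes \SL_2$ with $U$ a $5$-dimensional connected unipotent group. While this follows from standard parabolic theory, making it rigorous (especially over a non-algebraically-closed field) requires a careful treatment of the root structure of $G_2$ or an explicit description via the octonion algebra. Everything else is bookkeeping once the setup of Section~\ref{sec:setup} is in place.
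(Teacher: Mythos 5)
Your proof follows the same overall stratification strategy as the paper's, but computes the class of the stabilizer $G'$ of a null vector by a genuinely different route. The paper avoids identifying $G'$ explicitly: it passes to the parabolic $P$ stabilizing the \emph{line} through a null vector, uses the $\gm$-torsor $\B G' \to \B P$ to write $\{\B G'\}=(\LL-1)\{\B P\}$, and then uses the $\GL_2$-torsor $\B H \to \B P$ (from the Levi decomposition $P\cong H\rtimes\GL_2$ with $H$ unipotent of dimension $5$) to compute $\{\B P\}$. You instead identify $G'$ directly as $U\rtimes\SL_2$, observe that this is a split extension of special groups and hence special, and apply Proposition~\ref{prop:bg}. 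Both routes give $\LL^{-6}(\LL^2-1)^{-1}$, and your identification of $G'$ is in fact correct (with $\omega_1=2\alpha+\beta$ and $\omega_1^\vee=2\alpha^\vee+3\beta^\vee$ one computes $\langle\omega_1,\omega_1^\vee\rangle=2$, and after quotienting by the shared $\mu_2$ with the derived $\SL_2$ of the Levi, the stabilizer of the vector itself intersects the Levi precisely in $\SL_2$). The price you pay is that you need to know $G'$ is connected with reductive quotient exactly $\SL_2$ rather than a disconnected group; the paper's detour through $P$ only uses that $G'=\ker(P\to\gm)$, which holds regardless of connectedness, so it is somewhat more robust. You flag this yourself, correctly, as the main obstacle.

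One caveat worth naming explicitly: a dimension count showing the orbit is open in an irreducible variety does \emph{not} by itself prove transitivity (there can be invariant closed subsets of smaller dimension consisting of further orbits). The paper handles this by citing concrete transitivity results --- \cite[Propositions 6.2--6.3]{guillot} in characteristic $0$, \cite[\S 4.3]{wilson} in odd characteristic, and also \cite[Proposition 1.4(iii)]{macdonald} and \cite[\S 36, Exercise 6]{involutions} for the sphere --- and you should do the same rather than rely on ``irreducibility plus dimension count.'' With that citation supplied, the argument is complete.
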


\begin{proof}

As explained in (\ref{sec:setup}), we will stratify $k^7$ in invariant subvarieties, by considering as strata the origin $\{0\}$, the punctured null-cone $C_7=\{ 0\neq v \in k^7 \mid q(v)=0\}$, and the complement $B_7=k^7\setminus \overline{C_7}$.

From this stratification, using the fact that $[\{0\}/G_2]\cong \B G_2$ and $\{[k^7/G_2]\}=\LL^7 \{\B G_2\}$ 
we obtain
\begin{equation}
\label{eq:2}
\{\B G_2\}(\LL^7-1)=\{[C_7/G_2]\}+\{[B_7/G_2]\}.
\end{equation}
We will handle the two terms on the right hand side separately.

{For the first term, the action of $G_2$ on $C_7$ is transitive: see \cite[Proposition 6.3]{guillot} for the case of characteristic $0$, and \cite[Section 4.3.5]{wilson} for odd characteristic. Both references deal with specific fields (the field of complex numbers $\bC$ and any finite field $\bF_q$ with $q$ odd, respectively), but standard arguments and the fact that the action is defined on the prime field give the same conclusion for an arbitrary field $k$ of characteristic not $2$.} Call $G'\subset G_2$ the stabilizer of a point of $C_7$, so that $[C_7/G_2]\cong \B G'$. If we projectivize the situation, we find that $\bP(C_7)\cong G_2/P$ for a parabolic subgroup $P$ of $G_2$ that fits into a short exact sequence
$$
\xymatrix{
0\ar[r] & G'\ar[r] & P\ar[r] & \gm\ar[r] & 0.
}
$$
Moreover, the Levi decomposition of $P$ is $H\rtimes \GL_2$, where $H$ is unipotent of dimension $5$ (see \cite[Section 3]{zinovyg2}).

Now since $\B G'\to \B P$ is a $\gm$-torsor, we find $\{\B G'\}=(\LL-1) \{\B P\}$, and since $\B H\to \B P$ is a $\GL_2$-torsor, we find $\{\B H\}=\{\GL_2\}\cdot \{\B P\}$. Since $H$ is unipotent of dimension $5$, by Proposition \ref{prop:unipotent} we obtain $\{\B H\}=\LL^{-5}$, and it is well known (see for example \cite[Proposition 1.1]{ekedahl-grothendieck}) that
$$
\{\GL_n\}=(\LL^n-1)(\LL^n-\LL)\cdots (\LL^n-\LL^{n-1}).
$$
By using these formulas we obtain
\begin{equation} 
\label{eq:3}
\{[C_7/G_2]\}=\{\B G'\}=(\LL-1)\LL^{-5}\{\GL_2\}^{-1}=\LL^{-6}(\LL^2-1)^{-1}
\end{equation}
which is the first term inside the parentheses in (\ref{eq:1}).

As for the second term, as explained in (\ref{sec:setup}) we have
$$
\{[B_7/G_2]\}=(\LL-1)\{[Q_7/(G_2\times \mu_2)]\}
$$
where $Q_7$ is the locus of points $v\in k^7$ such that $q(v)=1$. {Now observe that the action of $G_2$ on $Q_7$ is transitive (see again \cite[Proposition 6.2]{guillot} for characteristic $0$ and \cite[Section 4.3.3]{wilson} for odd characteristic, or \cite[Proposition 1.4 (iii)]{macdonald} and \cite[\S 36 Exercise 6]{involutions})},
so we have $[Q_7/(G_2\times \mu_2)]\cong \B G$, where $G$ is the stabilizer of $e_1$, as in the statement of the Lemma.  

Combining
$$
\{[B_7/G_2]\}=(\LL-1)\{[Q_7/(G_2\times \mu_2)]\}=(\LL-1)\{\B G\}
$$
with equations (\ref{eq:2}) and (\ref{eq:3}) concludes the proof.
\end{proof}

We now turn our attention to the group $G$. We will consider the natural action of $G$ on the orthogonal complement of $e_1$ in $k^7$, via the group $\O(6)$, and use similar arguments. The following lemma gives a convenient description of the group $G$, that will allow us
to get a better grip on this action

\begin{lemma}\label{lemma:description.group.g}
The group $G$ is isomorphic to the semidirect product $SL_3 \rtimes \mu_2$, where the morphism $\mu_2 \rightarrow \aut (\SL_3)$ is given by $(-1)\cdot (A)=(A^{\rT})^{-1}$.
\end{lemma}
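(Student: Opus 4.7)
The plan is to combine two classical facts: the stabilizer of $e_1 \in k^7$ in $G_2$ is isomorphic to $\SL_3$, and this $\SL_3$ sits inside $G_2$ in a way that admits an outer involution exchanging the two $3$-dimensional isotypic pieces of the $7$-dimensional representation. Specifically, I would start from the $\SL_3$-equivariant decomposition
$$ k^7 \;=\; k \cdot e_1 \;\oplus\; V \;\oplus\; V^*, $$
where $V$ is the standard representation of $\SL_3 = \mathrm{Stab}_{G_2}(e_1)$, the subspaces $V$ and $V^*$ are totally isotropic for $q$, and $q\rest{V \oplus V^*}$ is (twice) the natural pairing $V \otimes V^* \to k$ (see e.g.\ \cite{zinovyg2}).

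By definition, $G$ consists of pairs $(g, \xi) \in G_2 \times \mu_2$ with $g(e_1) = \xi \cdot e_1$. Projection onto $\mu_2$ gives a homomorphism $p \colon G \to \mu_2$ whose kernel is $\mathrm{Stab}_{G_2}(e_1) \cong \SL_3$. Surjectivity of $p$ follows from the transitivity of the $G_2$-action on $Q_7 = \{q = 1\}$ used in Lemma \ref{lemma:group.g2}: since $-e_1 \in Q_7$, there is some $g_0 \in G_2$ with $g_0(e_1) = -e_1$. To split $p$ and simultaneously identify the $\mu_2$-action, I would construct an explicit involution $\tau \in G_2$ sending $e_1$ to $-e_1$: picking a vector space isomorphism $\phi \colon V \to V^*$ corresponding to a symmetric bilinear form on $V$, I set $\tau(e_1) = -e_1$, $\tau(v) = \phi(v)$ for $v \in V$, and $\tau(w) = \phi^{-1}(w)$ for $w \in V^*$. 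A short check then shows $\tau^2 = \id$ and $\tau$ preserves $q$. The key technical point is to verify that $\phi$ can be chosen so that $\tau$ additionally preserves the $G_2$-invariant $3$-form on $k^7$; this is a linear condition on $\phi$ and can be arranged.

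Given such a $\tau$, the pair $(\tau, -1) \in G$ is an involution lifting $-1 \in \mu_2$, so $G \cong \SL_3 \rtimes \mu_2$. To identify the action, I would observe that for $A \in \SL_3$ acting as $A$ on $V$ and as $(A^T)^{-1}$ on $V^*$, the conjugate $\tau A \tau^{-1}$ acts on $V$ as $\phi^{-1} \circ (A^T)^{-1} \circ \phi$, which under the identification $V \cong V^*$ via $\phi$ is simply $A \mapsto (A^T)^{-1}$; this yields the desired semidirect product structure. The main obstacle in the argument is the verification that $\tau$ genuinely lies in $G_2$ (rather than merely $\O_7$); this amounts to a direct calculation with an explicit formula for the $G_2$-invariant $3$-form on $k^7$, which one could either perform by hand or bypass by appealing to known structural results about $G_2$ and its $\SL_3$-subgroup.
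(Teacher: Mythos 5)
Your proposal follows the same architecture as the paper's proof: identify the kernel of $G \to \mu_2$ with $\SL_3$ via \cite[Proposition 1.4(iii)]{macdonald} (you phrase this via the decomposition $k^7 = k\cdot e_1 \oplus V \oplus V^*$, which is the same fact), use transitivity of $G_2$ on $Q_7$ to see the projection to $\mu_2$ is surjective, split the extension by exhibiting an explicit involution in $G_2$ sending $e_1 \mapsto -e_1$, and read off the resulting $\mu_2$-action on $\SL_3$. The only real divergence is in how the splitting involution is produced: the paper writes down the concrete octonion automorphism $\sigma(e_i) = (-1)^i e_i$, for which membership in $G_2$ is an immediate check against the octonion multiplication table, whereas you construct $\tau$ abstractly from a symmetric isomorphism $\phi \colon V \to V^*$ and must then verify $\tau$ preserves the $G_2$-invariant $3$-form. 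You correctly identify this as the crux, but you leave it at ``can be arranged'' and ``a direct calculation,'' which is the one step in your argument that is genuinely deferred rather than carried out; the paper's explicit $\sigma$ is precisely a device to make that verification trivial. Your computation of the induced action, $\tau A \tau^{-1} = \phi^{-1}\circ (A^\rT)^{-1}\circ \phi$, is correct and after choosing $\phi$ to be the identity matrix (or absorbing the conjugation into the isomorphism type of the semidirect product) recovers the stated action $A \mapsto (A^\rT)^{-1}$, in agreement with the paper's appeal to \cite[Section 2.1]{chloe-pauly}.
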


\begin{proof}
By \cite[Proposition 1.4 (iii)]{macdonald} the stabilizer of $e_1$ for the action of $G_2$ on $k^7$ is a copy of $\SL_3$, and this gives an embedding $\SL_3\subset G$, which is clearly the kernel of the (surjective) natural projection $G\subset G_2\times \mu_2 \to \mu_2$. This gives a short exact sequence 
$$
\xymatrix{
0\ar[r] & \SL_3 \ar[r] & G \ar[r] & \mu_2 \ar[r] & 0
}
$$
that we can split by considering the element of $G_2$ of order $2$ that sends $e_1$ to $-e_1$, given by $\sigma \in G_2$ that acts on the standard basis $\{e_1,\hdots, e_7\}$ (which can be thought of as the imaginary units of the octonions, if $k=\bC$) by $\sigma(e_i)=(-1)^ie_i$. Here we are using the following notation for the octonions: $e_0=1$ is the unit, $e_1,e_2, e_3$ span a quaternion subalgebra, and can be identified with the units of the quaternions $i,j,k$ respectively, $e_4$ is another independent element with square $-1$, and the rest are the products $e_5=e_1e_4$, $e_6=e_2e_4$,  and $e_7=e_3e_4$.

The assertion about the resulting homomorphism $\mu_2\to \aut(\SL_3)$ follows from the description of the embeddings $\SL_3\subset G_2$ given in \cite[Section 2.1]{chloe-pauly}.
\end{proof}

Let $V=k^3$ be the standard $3$-dimensional representation of $\SL_3$. We can define a $6$-dimensional representation of $G=\SL_3\rtimes \mu_2$ on $V\oplus V$ via the formulas
$$A\cdot (v,w)=(Av, (A^{\rT})^{-1}w), \quad (\id, -1) \cdot (v, w)=(w,v),$$ so that for a general element $(A,\xi) \in G$ with $\xi\in \mu_2$, we have $$(A,\xi)\cdot (v,w)=A\cdot (\xi(v,w))=\begin{cases} A\cdot (v,w)\quad  \mbox{ if } \xi=1 \\   A\cdot (w,v) \quad \mbox{ if } \xi=-1 \end{cases}$$

\begin{lemma}\label{lemma:group.g}
We have
$$
\{\B G\}=(\LL^6-1)^{-1}\Big( \LL^{-3}+\LL^{-3}(\LL^2-1)^{-1} + (\LL-1)\{\B H\}\Big)
$$
in $\kstack$, where $H\subset G\times \mu_2$ is the stabilizer of $(e_1,e_1)\in V\oplus V$ (and $\mu_2$ acts by scaling, as usual).
\end{lemma}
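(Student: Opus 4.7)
The plan is to apply the stratification strategy of Section~\ref{sec:setup} to the 6-dimensional representation of $G = \SL_3 \rtimes \mu_2$ on $V \oplus V$. I first verify that the bilinear form $q(v,w) = v^{\rT} w$ is $G$-invariant, both under the $\SL_3$-action $A \cdot (v,w) = (Av, (A^{\rT})^{-1}w)$ and under the $\mu_2$-swap $(v,w) \mapsto (w,v)$. Stratifying $V \oplus V = \{0\} \sqcup C \sqcup B$ as in Section~\ref{sec:setup}, Proposition~\ref{prop:linear.rep} yields
\[
\{\B G\}(\LL^6 - 1) = \{[C/G]\} + \{[B/G]\}.
\]
For the $B$-piece, a dimension count (the $\SL_3$-stabilizer of $(e_1,e_1) \in Q$ is the block $\SL_2 \subset \SL_3$) shows that $\SL_3$, and a fortiori $G \times \mu_2$, acts transitively on $Q$, so the $\gm \times Q \to B$ $\mu_2$-torsor argument of Section~\ref{sec:setup} gives $\{[B/G]\} = (\LL-1)\{\B H\}$.

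Unlike Lemma~\ref{lemma:group.g2}, the $G$-action on $C$ is not transitive, so I further stratify $C = C_A \sqcup C_B$ with $C_A = \{(v,0) : v \neq 0\} \cup \{(0,w) : w \neq 0\}$ and $C_B = \{(v,w) : v,w \neq 0, v^{\rT} w = 0\}$. Both are $G$-invariant and form a single $G$-orbit: the $\mu_2$-swap identifies the two pieces of $C_A$, while $C_B$ is already a single $\SL_3$-orbit by a Witt-type argument (fixing $e_1$, the parabolic $P_1$ acts on $e_1^{\perp} \setminus 0$ through its $\SL_2$-quotient transitively). The stabilizer of $(e_1, 0)$ in $G$ is the parabolic $P_1 \cong \SL_2 \ltimes \ga^2$ (with no $\mu_2$-contribution, as the swap sends $(e_1,0)$ to the disjoint piece), so by Proposition~\ref{prop:linear} and the specialness of $\SL_2$:
\[
\{[C_A/G]\} = \{\B P_1\} = \LL^{-2}\{\B \SL_2\} = \LL^{-3}(\LL^2-1)^{-1}.
\]

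The main obstacle is showing $\{[C_B/G]\} = \LL^{-3}$. An explicit computation identifies the $G$-stabilizer of $(e_1, e_2) \in C_B$ as $H' = U \rtimes \mu_2$, where $U \subset \SL_3$ is the $3$-dimensional unipotent (Heisenberg) subgroup simultaneously fixing $e_1 \in V$ and $e_2 \in V$ under the contragredient, and the $\mu_2$-factor is generated by $(A_0, -1) \in G$ for $A_0$ the involution swapping $e_1 \leftrightarrow e_2$ and negating $e_3$. To compute $\{\B H'\}$, the plan is to extend the argument of Proposition~\ref{prop:linear} from vector spaces to arbitrary connected unipotent normal subgroups: the split sequence $1 \to U \to H' \to \mu_2 \to 1$ realizes $\B H' \to \B \mu_2$ as a sectioned $\B U$-fibration, yielding
\[
\{\B H'\} = \{\B U\}\cdot\{\B \mu_2\} = \LL^{-3} \cdot 1 = \LL^{-3},
\]
where $\{\B U\} = \LL^{-3}$ is Proposition~\ref{prop:unipotent} and $\{\B \mu_2\} = 1$ follows by applying the same kind of stratification to $\AA^1$ under the sign action. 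Combining the $C_A$ and $C_B$ contributions gives $\{[C/G]\} = \LL^{-3} + \LL^{-3}(\LL^2-1)^{-1}$, which together with the formula for $\{[B/G]\}$ yields the claimed identity.
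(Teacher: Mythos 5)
Your strategy and stratification agree with the paper's: the quotient $[B/G]$ is handled by the $\mu_2$-torsor trick, $[C_A/G]$ (the paper's $D_6$) gives $\LL^{-3}(\LL^2-1)^{-1}$ via $\ga^2\rtimes\SL_2$, and the remaining piece $C_B$ (the paper's $C_6$) has a single orbit with stabilizer isomorphic to $U\rtimes\mu_2$ for $U$ the $3$-dimensional Heisenberg unipotent group, with $\mu_2$ acting by an outer involution. Up to this point the argument is sound (your transitivity argument for $C_B$ via the $\SL_2$-quotient of the parabolic is in fact a bit cleaner than the case analysis in the paper).

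The gap is in the computation of $\{\B(U\rtimes\mu_2)\}$. You assert that Proposition~\ref{prop:linear} extends to ``arbitrary connected unipotent normal subgroups'' and that the split sequence $1\to U\to U\rtimes\mu_2\to\mu_2\to 1$ realizes $\B(U\rtimes\mu_2)\to\B\mu_2$ as a ``sectioned $\B U$-fibration,'' from which you conclude $\{\B(U\rtimes\mu_2)\}=\{\B U\}\cdot\{\B\mu_2\}=\LL^{-3}$. Neither step is justified. Proposition~\ref{prop:linear} is proved by observing that $[V/(V\rtimes G)]\cong\B G$ and that the projection $[V/(V\rtimes G)]\to\B(V\rtimes G)$ is a \emph{vector bundle}, using the linear structure of $V$ in an essential way; for the non-abelian Heisenberg group $U$ the analogous map $[U/(U\rtimes\mu_2)]\cong\B\mu_2\to\B(U\rtimes\mu_2)$ is a fiber bundle with fiber $U$, but it is neither a vector bundle nor a $G$-torsor for any group (the subgroup $\mu_2$ is not normal in $U\rtimes\mu_2$), so no relation in $\kstack$ is produced by it. The only relations available are scissor relations, vector-bundle relations, and Proposition~\ref{prop:bg} for torsors under special groups; a general fibration with a section does not yield a product formula. (One might try to go through the lower central series $1\to Z(U)\to U\rtimes\mu_2\to\ga^2\rtimes\mu_2\to 1$, but that extension does not split, since the central extension $1\to Z(U)\to U\to U/Z(U)\to 1$ does not, so Proposition~\ref{prop:linear} still does not apply.) This is exactly where the paper has to do real work: Lemma~\ref{lemma:bu} establishes $\{\B(U\rtimes\mu_2)\}=\LL^{-3}$ by stratifying the $6$-dimensional representation under the $U\rtimes\mu_2$-action into roughly ten invariant pieces and solving the resulting linear relation. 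Without that (or a genuinely new argument), your proof is incomplete at its key step.
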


\begin{proof}

On $V\oplus V$, we consider the quadratic form $q(v,w)=v^{\rT}w$. The action of $G$ fixes the associated symmetric bilinear product, so it factors as $G\to \O(6)\to \GL_6$. We stratify the vector space $V\oplus V$ using the quadratic form. We define the loci

\begin{itemize}
\item $B_6=\lbrace (v,w) \in V\oplus V \mid q(v,w) \neq 0 \rbrace$
\item $C_6=\lbrace (v,w) \in V\oplus V \mid v,w\neq 0 \mbox{ and } q(v,w)=0 \rbrace$
\item $D_6=V\times 0 \cup 0\times V \smallsetminus \lbrace (0,0) \rbrace$
\end{itemize}
so that we can write
\begin{equation}
\label{eq:5}
\{\B G\}(\LL^6-1)=\{[C_6/G]\}+\{[D_6/G]\}+\{[B_6/G]\}.
\end{equation}
Contained in $B_6$ we also have the smooth quadric $Q_6=\lbrace (v,w)\in V\oplus V \mid q(v,w) =1 \rbrace$. We can apply the usual trick explained in (\ref{sec:setup}) to obtain that 
$$
\{ [B_6/G ]\}=(\mathbb{L}-1)\lbrace [ Q_6/G \times \mu_2 ] \rbrace
$$
Now note that the action of $G\times \mu_2$ on $Q_6$ is transitive: let $(v,w)$ be a point of $Q_6$, and note that we can pick $A \in \SL_3$ such that $Av=e_1$, so we can assume $v=(e_1,u)$, where $u=(1,a,b)$. It suffices to show that $(e_1,e_1)$ can be mapped to $(e_1,u)$ for all $u=(1,a,b)$. To do so we can just pick the matrix
\[
\left( \begin{array}{ccc} 1 & -a & -b\\
0 & 1 & 0\\
0 & 0 & 1 

\end{array} \right).
\] 
This shows that the action is transitive. 

Let $H\subset G\times \mu_2$ be the stabilizer of $(e_1,e_1)$. Then we deduce
\begin{equation}
\label{eq:6}
\{ [B_6/G ]\}=(\LL-1)\lbrace [ Q_6/G \times \mu_2 ] \rbrace = (\LL-1)\{\B H\}.
\end{equation}
Let us turn to the loci $C_6$ and $D_6$. The situation on $D_6$ is simple: the action is clearly transitive, and the stabilizer $G'$ of an element (pick $(e_1,0)$, for example) is isomorphic to the group of matrices of the form
\[
\left( \begin{array}{ccc} 1 & a & b\\
0 & c & d\\
0 & e & f 
\end{array} \right)
\]
where the $2\times 2$ lower right block is in $\SL_2$. The group $G'$ has a normal subgroup isomorphic to $\ga^2$, given by the matrices as above for which $c=f=1$ and $d=e=0$. The quotient is isomorphic to $\SL_2$. 

From this, since $\B\ga^2 \to \B G'$ is an $\SL_2$-torsor, and both $\ga^2$ and $\SL_2$ are special, we have
\begin{equation}
\label{eq:7}
\{[D_6/G]\}=\{\B G'\}=\{\SL_2\}^{-1}\LL^{-2}=\LL^{-3}(\LL^2-1)^{-1}
\end{equation}
where we used the formula $\{\SL_2\}=\LL(\LL^2-1)$, that follows from $\{\GL_2\}=(\LL^2-1)(\LL^2-\LL)$ and the fact that the determinant $\GL_2\to \bG_m$ is an $\SL_2$-torsor.

Finally let us consider the action on $C_6$. We first claim that this action is transitive. Pick an element $(v,w)$ in $C_6$. As before we can assume that $v=e_1$, and it suffices to show that it is conjugate to $(e_1,e_2)$. Suppose first that $\| w \| \neq 0$ (where we denote by $\| w \|$ a chosen solution $x\in k$ of the equation $x^2=w_1^2+w_2^2+w_3^2$). Set $w'=\|w\|^{-1}w$, and let $e_1, w', u$ be a positively oriented orthonormal basis. Then there is an element $A \in \SO_3 \subset \SL_3$ sending $e_1, e_2, e_3$ to $e_1, w', u$. As $(A^{\rT})^{-1}=A$, we get $A(e_1,e_2)=(e_1,w')$. We can now pick an \emph{ad hoc} diagonalizable matrix with determinant $1$ to send $(e_1, w')$ to $(e_1,w)$.

Now let $w$ be an isotropic vector. As $w$ is orthogonal to $e_1$, it must be equal to $a e_2 + b e_3$ for some $a,b \neq 0$ with $a^2+b^2=0$. Up to multiplying by an invertible diagonal matrix with determinant $1$ (that might move $e_1$ and send it to $\alpha  e_1$) we can assume $a=1, b=\sqrt{-1}$. Consider the matrix

\[ A=
\left( \begin{array}{ccc} 1 & 0 & 0\\
0 & 1 & 0 \\

0 & -\sqrt{-1} & 1
\end{array} \right).
\] 

The matrix $A$ sends $w=e_2+\sqrt{-1}e_3$ to $e_2$ and has determinant $1$. As $(A^{\rT})^{-1}$ sends $e_1$ to itself, we have $(A^{\rT})^{-1}(\alpha e_1,w)=(\alpha e_1,e_2)$. By acting with a diagonal matrix with entries $\alpha^{-1}, 1 , \alpha$, we can get to $(e_1,e_2)$. This shows that the action on $C_6$ is transitive.

Consider now the stabilizer of the point $(e_1,e_3)$. It is not hard to check that any $A \in \SL_3$ that is an upper triangular matrix whose diagonal entries are $1$ fixes $(e_1,e_3)$, and that these are the only elements of $\SL_3$ that do so. This does not describe the whole stabilizer in $\SL_3\rtimes \mu_2$, because we also have elements of the form $(B,-1)$ where $B(e_1,e_3)=(e_3,e_1)$ (for example, if $B$ sends $e_1,e_2, e_3$ to $e_3, -e_2, e_1$ respectively).

More precisely, if $U$ is the unipotent group of upper triangular matrices with entries equal to $1$ on the diagonal, then the stabilizer of $(e_1,e_3)$ in $\SL_3\rtimes \mu_2$ is isomorphic to the semidirect product $U\rtimes \mu_2$ (where the projection to $\mu_2$ corresponds to the projection $\SL_3\rtimes \mu_2\to \mu_2$, and the group $U$ is the kernel of this map). The action of $\mu_2$ on $U$ is determined by sending $-1\in \mu_2$ to the automorphism
\[
\left( \begin{array}{ccc} 1 & a & b\\
0 & 1 & c\\
0 & 0 & 1 
\end{array} \right) \mapsto \left( \begin{array}{ccc} 1 & c & ac-b\\
0 & 1 & a\\
0 & 0 & 1 
\end{array} \right)
\]
of $U$. By Proposition \ref{prop:unipotent}, the class of $\B U$ is $\LL^{-3}$.

In order to compute the class $\{\B(U\rtimes \mu_2)\}$, we will consider the linear action of $U\rtimes \mu_2$ on $V\oplus V$ itself (induced by the inclusion $U\rtimes \mu_2\subset \SL_3\rtimes \mu_2$), and stratify the space in invariant subspaces whose class we can compute.

\begin{lemma}\label{lemma:bu}
The class of $\B(U\rtimes\mu_2)$ is $\LL^{-3}$.
\end{lemma}

After we have proved this lemma, putting $\{[C_6/G]\}=\{\B(U\rtimes \mu_2)\}=\LL^{-3}$ together with (\ref{eq:5}), (\ref{eq:6}) and (\ref{eq:7}) concludes the proof of Lemma \ref{lemma:group.g}.
\end{proof}

\begin{proof}[Proof of Lemma \ref{lemma:bu}]
We start by considering the action of $U$ on $V \oplus V$. 
Consider the following locally closed subsets of $V$:
\begin{itemize}
\item $A_1=\lbrace (a,b,\lambda)  \rbrace$,  $A_2=\lbrace (a,\lambda,0) \rbrace$, $A_3=\lbrace (\lambda,0,0)  \rbrace$, and
\item $B_1=\lbrace (\gamma,a',b')  \rbrace$, $B_2=\lbrace (0,\gamma,a') \rbrace$, $B_3=\lbrace (0,0,\gamma) \rbrace$
\end{itemize}
where $\lambda, \gamma \in k^\times$ and $a,b,a',b' \in k$.

Each of the sets $A_i$ is invariant under the action of $U$ on the first copy of $V$ in $V\oplus V$, and each of the $B_j$ is invariant under the inverse transpose action on the second copy of $V$. Thus the products $A_i \times B_j \subset V \oplus V$ are invariant under the action of $U$. We have $\cup_{i=1}^{3} A_i=V \smallsetminus \lbrace 0 \rbrace$ and $\cup_{j=1}^{3} B_j=V \smallsetminus \lbrace 0 \rbrace$.

Now consider the action of $U\rtimes \mu_2$ on $V\oplus V$. We are going to stratify $V \oplus V$ as a union of products of the $A_i$ and $B_j$. Note that the action of $\mu_2$, whose non-trivial element acts by  $$((a,b,c),(c,d,e))\mapsto ((e,-d,c),(c,-b,a)),$$ sends $A_i$ to $B_i$ and vice-versa, so in our stratification we will have some strata that are the union of two components $A_i \times B_j \cup A_j \times B_i$ with $i\neq j$, and some strata that are a single component $A_i \times B_i$. We will compute $\{B(U\rtimes \mu_2)\}$ by computing the class of the quotient stacks of these strata by the action of $U\rtimes \mu_2$.

The first type of strata are the easiest to treat, as we only have to understand the action of $U$ on one of the two components, so we will start from these. We have a $U$-equivariant isomorphism $A_i \cong B_i$, so for each of these strata the two projections to $\GG_m$, given by $\lambda$ and $\gamma$ on one component and the other one in the second component, are invariant maps, giving us a projection from the quotient to $\GG_m^2$, which always has a section.

\begin{itemize}
\item[($S_1$)] $A_1 \times B_3 \cup A_3 \times B_1$: each orbit is determined by its image in $\GG_m^2$, with stabilizer $\GG_a$, so $$\left[ (A_1 \times B_3 \cup A_3 \times B_1) /( U \rtimes \mu_2) \right] \cong \GG_m^2\times \B\GG_a.$$
\item[($S_2$)] $A_1 \times B_2 \cup B_1 \times A_2$: each orbit has an element of the form $( (0,0,\lambda),(\gamma,0,a') )$ with stabilizer $\GG_a$, so $$\left[(A_1 \times B_2 \cup A_2 \times B_1)/( U \rtimes \mu_2) \right] \cong \GG_m^2\times \GG_a \times \B\GG_a.$$
\item[($S_3$)] $A_2 \times B_3 \cup A_3 \times B_2$: each orbit is determined by its image in $\GG_m^2$, with stabilizer $\GG_a^2$, so $$\left[ (A_2 \times B_3 \cup A_3 \times B_2 ) / (U \rtimes \mu_2) \right] \cong \GG_m^2\times \B\GG_a^2.$$
\item[($S_4$)] $(V\smallsetminus \{0\})\times 0 \cup 0 \times (V\smallsetminus \{0\})$: the quotient is isomorphic to $\left[(V \smallsetminus \{0\}) / U\right]$.
\end{itemize}
The remaining strata are a bit more bothersome and need to be stratified themselves. The problem here is that some elements, such as $((1,0,0),(0,0,1))$, will be fixed by the action of $\mu_2$. Denote by $\Delta_i$ the subset of elements with $\lambda=\gamma$ in $A_i\times B_i$.

Outside of the $\Delta_i$ we have an invariant map to $(\GG_m^2\smallsetminus \Delta)/\mu_2$ given by $\lambda$ and $\gamma$, where the action of $\mu_2$ either switches components or switches components and multiplies by $-1$. In either case the quotient is isomorphic to $\GG_m^2$. This projection factors through the coarse quotient space.

\begin{itemize}
\item[($S_5$)] $A_3 \times B_3 \smallsetminus \Delta_3$: each orbit is determined by its image, with stabilizer $U$.
\item[($S_6$)] $A_2 \times B_2 \smallsetminus \Delta_2$: each orbit is determined by its image, with stabilizer $\GG_a$.
\item[($S_7$)] $A_1 \times A_1 \smallsetminus \Delta_1$: the coarse quotient is $\GG_a\times \GG_m^2$, with trivial stabilizers.
\end{itemize}
Each of these maps gives rise to a gerbe which can be shown to be trivial. We will prove this for $A_2 \times B_2 \smallsetminus \Delta_2$, and a similar reasoning works for all other strata. This will show that the isomorphism classes of the quotients are respectively $\GG_m^2 \times \B U, \GG_m^2 \times \B\GG_a$ and  $\GG_a\times \GG_m^2$.

It is easy to see that using only the action of $U$, any point of $A_2 \times B_2 \smallsetminus \Delta_2$ is in the same orbit as a point of the form $((0,\lambda,0),(0,\gamma,0))$, where $(\lambda, \gamma) \in \GG_m^2\smallsetminus \Delta$. The stabilizer of such an element in $U \rtimes \mu_2$ is equal to the subgroup of matrices in $U$ of the form
$$
\left( \begin{array}{ccc} 1 & 0 & a\\
0 & 1 & 0 \\
0 & 0 & 1
\end{array} \right)
$$
which is isomorphic to $\GG_a$. Moreover, the quotient of $\GG_m^2\setminus \Delta$ by the induced action of $\mu_2=(U\rtimes \mu_2)/U$ is isomorphic to $\GG_m^2$, and since this action is free, this is an orbit space for the action of $U\rtimes \mu_2$ on $A_2 \times B_2 \smallsetminus \Delta_2$ (i.e. the fibers of the resulting map $A_2 \times B_2 \smallsetminus \Delta_2\to \GG_m^2$ are exactly the orbits). We also obtain a morphism $\left[(A_2 \times B_2 \smallsetminus \Delta_2)/(U\rtimes \mu_2) \right]\to \GG_m^2$.

Now consider the $U \rtimes \mu_2$-scheme given by $$X=(\GG_m^2\smallsetminus \Delta) \times ( U\rtimes \mu_2)$$ where $U\rtimes \mu_2$ acts trivially on the left component, and on the right component by multiplication on the left. The quotient $X/\mu_2$, where $\mu_2$ acts on the left on the first component and on the right on the second component, is an $U\rtimes \mu_2$-torsor, with quotient $(\GG_m^2\smallsetminus \Delta)/\mu_2=\GG_m^2$. The map $$X \rightarrow A_2 \times B_2 \smallsetminus \Delta_2, \quad ((\lambda, \gamma),(u,\xi))\rightarrow (u,\xi)\cdot ((0,\lambda,0),(0,\gamma,0))$$ is $\mu_2$-invariant and $U\rtimes \mu_2$-equivariant, so it descends to a $U\rtimes \mu_2$-equivariant map $X/\mu_2\to  A_2 \times B_2 \smallsetminus \Delta_2$. Overall this gives us a morphism $$ \GG_m^2 \rightarrow \left[(  A_2 \times B_2 \smallsetminus \Delta_2 )/ (U \rtimes \mu_2) \right]$$
which is a section of the projection $\left[(  A_2 \times B_2 \smallsetminus \Delta_2 )/ (U \rtimes \mu_2) \right]\to \GG_m^2$, showing that $\left[  (A_2 \times B_2 \smallsetminus \Delta_2) /( U \rtimes \mu_2) \right]$ is a trivial gerbe over $\GG_m^2$. We can now conclude by noting that the sheaf of automorphisms of the section is exactly $\GG_a$.

Now consider the quotients $\left[\Delta_i/(U \rtimes \mu_2) \right]$. They each have a map to $\GG_m$ given by either $\lambda$ or $\gamma$.

\begin{itemize}
\item[($S_8$)] $\Delta_3$: each orbit is determined by its image, with stabilizer $U\rtimes \mu_2$.
\item[($S_9$)] $\Delta_2$: each orbit is determined by its image, with stabilizer $\GG_a\rtimes \mu_2$.
\item[($S_{10}$)] $\Delta_1$: the coarse quotient is $\GG_a\times \GG_m$, the stabilizer of a point is $\mu_2$.
\end{itemize}
The same argument as above shows that these projections induce trivial gerbes. Finally note that, as $\mu_2$ acts linearly on $\GG_a$ in the second term, we know that the class of $\B(\GG_a\rtimes \mu_2)$ in $\kstack$ is going to be equal to $\LL^{-1}$ (by Proposition \ref{prop:linear}).

We are ready to conclude our computation. We want to show that

$$(\LL^6-1)\{\B(U\rtimes \mu_2)\}=\sum_{i=1}^{10}S_i=(\LL^6-1)\LL^{-3},$$
where $S_i$ denotes the class of the quotient stack of the stratum marked $(S_i)$ in the list above. Moving $S_8=(\LL-1)\{B(U\rtimes\mu_2)\}$ to the left-hand side we get
\begin{align*}
(\LL^6-\LL)\{\B(U\rtimes \mu_2 )\}& = S_1+\ldots+S_7+S_9+S_{10} \\
 & = (\LL-1)^2\LL^{-1}+(\LL-1)^2+(\LL-1)^2\LL^{-2}+(\LL^3-1)\LL^{-3} \\ & \quad +(\LL-1)^2\LL^{-3} 
 +(\LL-1)^2\LL^{-1}+(\LL-1)^2\LL  +(\LL-1)\LL^{-1}\\ & \quad +\LL(\LL-1)\\ & =  (\LL^6-\LL)\LL^{-3}
\end{align*}
as we claimed.
\end{proof}

Finally, let us focus on the group $H\subset G\times \mu_2 \subset (G_2\times \mu_2)\times \mu_2$ of Lemma \ref{lemma:group.g}.

\begin{lemma}
The group $H$ is isomorphic to the semidirect product $\SL_2 \rtimes (\mu_2 \times \mu_2)$ where the homomorphism $\mu_2\times \mu_2\to \aut (\SL_2)$ sends $(-1,1)$ to $A\mapsto (A^\rT)^{-1}$ and $(1,-1)$ to the automorphism acting as
\[
\left( \begin{array}{ccc}
a & b \\
c & d 
\end{array} \right) \mapsto \left( \begin{array}{ccc}
 a & -b\\
 -c & d 
\end{array} \right).
\]
\end{lemma}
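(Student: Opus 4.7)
The plan is to solve the stabilizer equations directly, using the explicit description $G = \SL_3 \rtimes \mu_2$ from Lemma \ref{lemma:description.group.g}, extract $\SL_2$ as the identity component, and then construct an explicit splitting of the resulting extension by $\mu_2 \times \mu_2$. First, I write a general element of $G \times \mu_2 = (\SL_3 \rtimes \mu_2) \times \mu_2$ as a triple $((A, \xi), \eta)$ and unpack the stabilizer equation $((A,\xi),\eta)\cdot(e_1,e_1)=(e_1,e_1)$. A short computation, treating the cases $\xi = \pm 1$ separately, shows that, independently of $\xi$, the condition is equivalent to $A e_1 = \eta e_1$ and $A^{\rT} e_1 = \eta e_1$. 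This forces $A$ to have the block form $A = \operatorname{diag}(\eta, B)$ with $B \in \GL_2$ and $\det B = \eta$, while $\xi, \eta \in \mu_2$ are otherwise unconstrained.

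The projection $\pi \colon H \to \mu_2 \times \mu_2$ given by $((A,\xi),\eta) \mapsto (\xi, \eta)$ is therefore a surjective homomorphism, and its kernel is the closed subgroup of triples $((\operatorname{diag}(1, B), 1), 1)$ with $B \in \SL_2$, which I identify with $\SL_2$. This yields a short exact sequence $1 \to \SL_2 \to H \to \mu_2 \times \mu_2 \to 1$; a dimension count against the $5$-dimensional transitive orbit $Q_6$ confirms that $\SL_2$ is the identity component of $H$.

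To split $\pi$ I exhibit commuting involutive lifts of the generators of $\mu_2 \times \mu_2$: take $s_2 = ((\id_3, -1), 1)$ lifting $(-1,1)$, and $s_3 = ((A_3, 1), -1)$ with $A_3 = \operatorname{diag}(-1, 1, -1)$ lifting $(1,-1)$. Since $A_3$ is a symmetric involution the automorphism $\phi_{-1}(X) = (X^{\rT})^{-1}$ of $\SL_3$ fixes it, and the relations $s_2^2 = s_3^2 = 1$ and $s_2 s_3 = s_3 s_2$ become one-line computations in $(\SL_3 \rtimes \mu_2) \times \mu_2$. This provides a section of $\pi$, giving the identification $H \cong \SL_2 \rtimes (\mu_2 \times \mu_2)$.

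Finally, to verify the stated action, I conjugate a typical $h_B = ((\operatorname{diag}(1,B), 1), 1) \in \SL_2 \subset H$ by $s_2$ and $s_3$. The first conjugation reduces to the definition of $\phi_{-1}$ restricted to the $2\times 2$ block and gives $B \mapsto (B^{\rT})^{-1}$. The second is the $3\times 3$ product $A_3 \operatorname{diag}(1, B) A_3$, which equals $\operatorname{diag}(1, B')$ where $B'$ is obtained from $B$ by changing the signs of its off-diagonal entries, which is exactly the automorphism in the statement. The only real bookkeeping hurdle is handling the sign cases in the stabilizer equations consistently and verifying the involutivity of $s_3$ in the semidirect product; everything else is explicit matrix algebra.
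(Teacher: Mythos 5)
Your proof is correct and follows essentially the same route as the paper: project $H\subset G\times\mu_2=(\SL_3\rtimes\mu_2)\times\mu_2$ onto $\mu_2\times\mu_2$, identify the kernel with $\SL_2$ sitting in the lower-right block of $\SL_3$, and split the sequence by exhibiting explicit commuting involutive lifts of the generators. The only cosmetic difference is the representative lifting $(1,-1)$: you take $\operatorname{diag}(-1,1,-1)$ while the paper uses $\operatorname{diag}(-1,-1,1)$; both are valid, and conjugation by either yields the same sign-flip automorphism of $\SL_2$.
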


\begin{proof}
Recall that $H\subset G\times \mu_2$ is the stabilizer of $(e_1,e_1)\in V\oplus V$. There is a natural projection $H\to \mu_2\times\mu_2$, which is the composite $H\subset G_2\times (\mu_2\times\mu_2)\to \mu_2\times \mu_2$, and also coincides with $H\subset G\times \mu_2=(\SL_3\rtimes \mu_2)\times \mu_2\to \mu_2\times\mu_2$, where in the last step we used the projection $\SL_3\rtimes \mu_2\to \mu_2$. The kernel of this map consists of matrices $A \in \SL_3$ that fix $e_1$, and such that $(A^\rT)^{-1}$ also fixes $e_1$. It is easy to see that this is isomorphic to $\SL_2$, embedded in $\SL_3$ as the ``lower right block''.

Thus we have a short exact sequence
$$
\xymatrix{
0\ar[r] &\SL_2\ar[r] & H\ar[r] & \mu_2\times \mu_2\ar[r] & 0.
}
$$
This is split, and the corresponding homomorphism $\mu_2\times \mu_2\to \aut(\SL_2)$ is as in the statement:
consider the subgroup of $H\subset \SL_3\rtimes \mu_2$ generated by $(\id,-1)$ 
and by $\tau=(M,1)$ where $M$ is the diagonal matrix sending $e_1, e_2, e_3$ to $-e_1, -e_2, e_3$ respectively. It is immediate to check that this subgroup is a copy of $\mu_2\times \mu_2$, and the map $\mu_2\times \mu_2\to H$ sending $(-1,1)$ to $(\id,-1)$ and $(1,-1)$ to $\tau$ is a splitting of the projection $H\to \mu_2\times \mu_2$. 
An easy computation shows that the actions of $(-1,1)$ and $(1,-1)$ on $\SL_2$ are as in the statement.
\end{proof}

\begin{lemma}\label{lemma:group.h}
We have
$$
\{\B H\}=(\LL^4-1)^{-1}\Big( 1+ \LL^{-1} +(\LL-1) \Big)=\LL^{-1}(\LL^2-1)^{-1}
$$
in $\kstack$.
\end{lemma}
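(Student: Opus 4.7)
The plan is to apply the general setup of Section \ref{sec:setup} to a well-chosen $4$-dimensional orthogonal representation of $H$. Let $V_2 = k^2$ denote the standard representation of $\SL_2$, and take $V = V_2 \oplus V_2$ equipped with the quadratic form $q(v, w) = v^{\mathrm{T}} w$. Define an action of $H$ on $V$ by letting $A \in \SL_2$ act as $(v, w) \mapsto (Av, (A^{\mathrm{T}})^{-1} w)$, the generator $(-1, 1) \in \mu_2 \times \mu_2$ swap the two summands, and $(1, -1)$ act by $(v, w) \mapsto (Dv, Dw)$ with $D = \mathrm{diag}(1, -1)$. A routine check shows this preserves $q$, and applying the setup of Section \ref{sec:setup} (without assuming transitivity on the null cone) yields
\[
\{\B H\}(\LL^4 - 1) = \{[C/H]\} + (\LL - 1)\{[Q/(H \times \mu_2)]\},
\]
where $C$ is the punctured null cone and $Q$ the unit quadric in $V$.

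For the $Q$ term, I would verify that $\SL_2$ already acts transitively on $Q$: any $(v,w) \in Q$ has $v \neq 0$ and can be moved to $(e_1, e_1)$ by a Gauss-type argument. The stabilizer of $(e_1, e_1)$ in $H \times \mu_2$ will turn out to be the elementary abelian $2$-group of order $8$ generated by $K_4 = \mu_2 \times \mu_2 \subset H$ (which fixes $(e_1, e_1)$ pointwise, as both $D$ and the swap preserve it) together with the involution $(-I, (1,1), -1)$, so $\{[Q/(H \times \mu_2)]\} = \{\B(\mu_2)^3\} = 1$ by Ekedahl's result for finite abelian constant group schemes. For the null cone, I would decompose $C = D \sqcup C'$ with $D = (V_2 \setminus 0) \times 0 \cup 0 \times (V_2 \setminus 0)$ and $C' = \{v, w \neq 0,\ v^{\mathrm{T}} w = 0\}$. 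The swap $(-1, 1)$ merges the two components of $D$ into a single $H$-orbit; a direct computation shows the stabilizer of $(0, e_1)$ to be the semidirect product of the lower-triangular unipotent $\ga \subset \SL_2$ with $\mu_2 = \langle (1, -1)\rangle$ acting linearly by negation, so Proposition \ref{prop:linear} gives $\{[D/H]\} = \LL^{-1}$.

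The main obstacle is proving $\{[C'/H]\} = 1$. I would introduce the $\SL_2$-invariant map $\mu \colon C' \to \GG_m$ defined by $\mu(v, w) = (v')^{\mathrm{T}} w$, where $v'$ is any vector satisfying $\det(v, v') = 1$; the identity $v^{\mathrm{T}} w = 0$ makes this independent of the choice of $v'$. The section $\mu_0 \mapsto (e_1, (0, \mu_0))$ identifies $[C'/\SL_2]$ with a trivial $\ga$-gerbe over $\GG_m$, where $\ga$ is the upper-triangular unipotent $\SL_2$-stabilizer. A direct computation shows $K_4$ acts on this $\GG_m$ by $(1, -1) \colon \mu \mapsto -\mu$ and $(-1, 1) \colon \mu \mapsto -1/\mu$, with fixed locus $\{\pm 1\} \cup \{\pm \sqrt{-1}\}$ forming two $K_4$-orbits, each of size $2$ and with stabilizer $\mu_2$. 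Stratifying accordingly: on the free stratum the coarse quotient is $\mathbb{A}^1 \setminus \{\pm 2\}$ (via the $K_4$-invariant $\tau = \mu^2 + \mu^{-2}$, whose full fixed-point description uses $\sqrt{-1} \in k$), contributing $(\LL - 2)\LL^{-1}$; and each of the two special orbits contributes $\{\B(\ga \rtimes \mu_2)\} = \LL^{-1}$, once one produces an order-$2$ lift in $H$ of the generator of the $K_4$-stabilizer and checks that its conjugation action on $\ga$ is linear (by negation). Summing, $\{[C'/H]\} = (\LL - 2)\LL^{-1} + 2\LL^{-1} = 1$.

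Substituting back yields $\{\B H\}(\LL^4 - 1) = 1 + \LL^{-1} + (\LL - 1) = \LL + \LL^{-1}$, and the stated formula $\{\B H\} = \LL^{-1}(\LL^2 - 1)^{-1}$ follows from $\LL^4 - 1 = (\LL^2 - 1)(\LL^2 + 1)$ together with $\LL + \LL^{-1} = \LL^{-1}(\LL^2 + 1)$. The main technical hurdle is the stabilizer analysis at the special points of $C'$: for the orbit $\{\pm \sqrt{-1}\}$ one must exhibit an explicit matrix in $\SL_2$ involving $\sqrt{-1}$ (crucially requiring the standing assumption $\sqrt{-1} \in k$) which, paired with the $K_4$-element $(-1, 1)$, gives an order-$2$ stabilizer, and then verify that conjugation acts linearly on the remaining $\ga$.
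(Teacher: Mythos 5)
Your proposal is correct and follows essentially the same route as the paper: it uses the same $4$-dimensional orthogonal representation of $H$ (you cook it up directly, the paper obtains it as the restriction to $\langle(e_1,0),(0,e_1)\rangle^\perp \subset V\oplus V$, but your matrix $D=\mathrm{diag}(1,-1)$ differs from the paper's $\mathrm{diag}(-1,1)$ only by the central element $-I\in\SL_2$, giving an isomorphic group), the same stratification into $Q$, $D$, $C'$, the same $\SL_2$-invariant scalar $\mu$ (which is precisely the paper's $\lambda$), the same split of $C'$ at $\mu^2=\pm 1$, and the same contributions $\LL-1$, $\LL^{-1}$, $2\LL^{-1}+(\LL-2)\LL^{-1}$. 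The only minor divergences are cosmetic refinements on your part: you pin down the order-$8$ stabilizer on $Q$ as $(\mu_2)^3$ and invoke the abelian case, whereas the paper deliberately does not identify the group and cites a result covering all groups of order $8$; and you prove transitivity of $\SL_2$ on $Q$ by a direct Gaussian argument rather than the paper's dimension count.
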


\begin{proof}
In the notation of the proof of Lemma \ref{lemma:group.g}, the action of $H$ fixes the subspace $\langle (e_1,0),(0,e_1) \rangle$, so we can consider the induced action of $H$ on the orthogonal space $W\oplus W$, where $W=\langle e_2,e_3\rangle=k^2$. The restriction of the quadratic form gives a non-degenerate quadratic form on $W\oplus W$, that we still denote by $q$, and that is given again by $q(v,w)=v^\rT w$. The group $H$ acts on $W\oplus W\cong k^4$ via $\O(4)$, and we will stratify this space as usual: define
\begin{itemize}
\item $B_4=\lbrace (v,w) \in W\oplus W \mid q(v,w) \neq 0 \rbrace$
\item $C_4=\lbrace (v,w) \in W\oplus W \mid v,w\neq 0 \mbox{ and } q(v,w)=0 \rbrace$
\item $D_4=W\times 0 \cup 0\times W \smallsetminus \lbrace (0,0) \rbrace$
\end{itemize}
so that, as in the previous cases, we obtain
\begin{equation}
\label{eq:8}
\{\B H\}(\LL^4-1)=\{[C_4/H]\}+\{[D_4/H]\}+\{[B_4/H]\}.
\end{equation}
If we denote by $Q_4\subset W\oplus W$ the non-singular quadric where $q(v,w)=1$, we also have
$$\{[B_4/H]\}=(\LL-1)\{[ Q_4/(H\times \mu_2)]\}$$ where $\mu_2$ acts by scaling. 

Let us check that the action of $H$ on $Q_4$ is transitive. Let $(v,w)$ be an element of $Q_4$. As usual we can assume that $w=e_2$, so that $w=(1,a)$. It is immediate to verify that the only element $A \in \SL_2$ fixing $(e_2,w)$ is the identity. This tells us that the orbit of any $(v,w)$ in $Q_4$ must have dimension three, which means that it is open in $Q_4$. This proves transitivity. 

It is also immediate to see that the stabilizer $H'$ of an element of $Q_4$ in $H\times \mu_2=(\SL_2\rtimes (\mu_2\times \mu_2))\times \mu_2$ is a finite group of order $8$. It is not hard to check that $\{BH'\}=1$ no matter what specific group of order $8$ this turns out to be (we can apply \cite[Proposition 2.3]{ivan} for the dihedral group $D_4$ and for the quaternion group). From this we get
$$
\{[B_4/H]\}=(\LL-1)\{[ Q_4/(H\times \mu_2)]\}=(\LL-1)\{\B H'\}=\LL-1.
$$
Let us now look at $D_4$: it is clear that the action of $H$ is transitive, and that the stabilizer of any point is a semidirect product $\ga\rtimes\ \mu_2$. Therefore we obtain
$$
\{[D_4/H]\}=\{\B(\ga\rtimes \mu_2)\}=\LL^{-1}.
$$
Finally let us consider $C_4$. The action on this piece is not transitive: if $(v,w)\in C_4$ and we write $v=(a,b)$, then there exists $\lambda\in \gm$ such that $w=\lambda\cdot (-b,a)$. The map $(W\setminus \{0\}) \times \gm\to C_4$ sending $((a,b),\lambda)$ to $((a,b), \lambda(-b,a))$ is an isomorphism.
The action of $\SL_2\subset H=\SL_2\rtimes (\mu_2\times \mu_2)$ on $C_4 \cong  (W\setminus \{0\}) \times \gm$ leaves $\lambda$ fixed, whereas the action of $(\id,(-1,1))$ sends $\lambda$ to $-1/\lambda$, and the action of $(\id,(1,-1))$ sends $\lambda$ to $-\lambda$.

Let us further split $C_4$ in its two subsets $C_4^1$ where $\lambda^2=\pm 1$, and $C_4^2$ where $\lambda^2\neq \pm 1$, so that
$$
\{[C_4/H]\}=\{[C_4^1/H]\}+\{[C_4^2/H]\}
$$
The set $C_4^1$ is a union of two orbits for the action, and the stabilizer in each case is a semidirect product $\ga\rtimes \mu_2$. Thus, $\{[C_4^1/H]\}=2\cdot \{\B(\ga\rtimes\mu_2)\}=2\LL^{-1}$.

On $C_4^2$, with arguments similar to the one used in the proof of Lemma \ref{lemma:bu} for the stratum $(S_6)$, one can show that there is an isomorphism
$$
[C_4^2/H]\cong (\bA^1\setminus \{\pm 2\})\times \B\GG_a
$$
so that
$
\{[C_4^2/H]\}=(\LL-2)\LL^{-1}.
$
Here  $\bA^1\setminus \{\pm 2\}$ is the quotient of $\GG_m\setminus \{\pm 1, \pm \sqrt{-1}\}$ by the action of $\mu_2\times \mu_2$ described above.

Overall, this shows that
$$
\{[C_4/H]\}=2\LL^{-1}+ (\LL-2)\LL^{-1}=1
$$
and concludes the proof.
\end{proof}

We can now conclude the computation of $\{\B G_2\}$.

\begin{proof}[Proof of Theorem \ref{prop:g2}]
Plugging the result of Lemma \ref{lemma:group.h} into the formula of Lemma \ref{lemma:group.g}, we obtain
\begin{align*}
\{\B G\} & = (\LL^6-1)^{-1}\Big( \LL^{-3}+\LL^{-3}(\LL^2-1)^{-1} + (\LL-1)\LL^{-1}(\LL^2-1)^{-1}\Big)  \\
& =(\LL^6-1)^{-1}\LL^{-3}(\LL^2-1)^{-1}\Big(   \LL^2-1+1+(\LL-1)\LL^{2}    \Big)\\
& = (\LL^6-1)^{-1}(\LL^2-1)^{-1}.
\end{align*}
Finally, plugging this value into the formula of Lemma \ref{lemma:group.g2} we obtain
\begin{align*}
\{\B G_2\} & =  (\LL^7-1)^{-1}\Big (\LL^{-6}(\LL^2-1)^{-1}+(\LL-1) (\LL^6-1)^{-1}(\LL^2-1)^{-1}   \Big )  \\
& = (\LL^7-1)^{-1}\LL^{-6}(\LL^6-1)^{-1}(\LL^2-1)^{-1}\Big( \LL^6-1+(\LL-1)\LL^6 \Big)\\
& =\LL^{-6}(\LL^6-1)^{-1}(\LL^2-1)^{-1}.
\end{align*}
The last formula coincides with the formula for the class $\{G_2\}^{-1}$ (Proposition \ref{prop:formulas}).
\end{proof}

\subsection{The classes of $\B \Spin_7$ and $\B\Spin_8$}

Having computed the class of $\B G_2$, we are also ready to compute $\{\B \Spin_7\}$ and $\{\B\Spin_8\}$.

\begin{theorem}\label{prop:spin78}
We have $\{\B\Spin_7\}=\{\Spin_7\}^{-1}$ and $\{\B\Spin_8\}=\{\Spin_8\}^{-1}$ in $\kstack$.
\end{theorem}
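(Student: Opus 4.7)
The plan is to apply the general setup of Section~\ref{sec:setup} to each group in turn, using the equality $\{\B G_2\}=\{G_2\}^{-1}$ from Theorem~\ref{prop:g2} as the key input for $\Spin_7$, and then the resulting value of $\{\B\Spin_7\}$ to handle $\Spin_8$.

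\emph{For $\Spin_7$}, I would take $V$ to be the $8$-dimensional spin representation, which is orthogonal and on which the central element $-1\in\Spin_7$ acts as $-\mathrm{id}_V$. Stratifying $V$ as in Section~\ref{sec:setup}, the action on the unit quadric $Q$ is transitive with stabilizer $G_2$ by a classical result of Cartan; combined with the action of the center, the stabilizer of a point of $Q$ inside $\Spin_7\times\mu_2$ should be the direct product $G_2\times\mu_2$, with class $\{\B G_2\}$. For the punctured null cone $C$, I would verify transitivity of $\Spin_7$ (the pure spinors forming a single orbit, the affine cone over $\Spin_7/P$ where $P$ is the maximal parabolic attached to the spin node of $B_3$) and identify the stabilizer $G'$ explicitly as a semidirect product $L\ltimes U$ with $U$ unipotent and $L$ reductive, whose classifying stack class should be computable via Propositions~\ref{prop:bg}, \ref{prop:linear} and~\ref{prop:unipotent}. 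The setup then yields
$$
\{\B\Spin_7\}(\LL^8-1)=\{\B G'\}+(\LL-1)\{\B G_2\},
$$
and a direct comparison with Proposition~\ref{prop:formulas} should give $\{\B\Spin_7\}=\{\Spin_7\}^{-1}$.

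\emph{For $\Spin_8$}, I would use the $8$-dimensional vector representation $V=k^8$ coming from $\Spin_8\to\SO_8$, so that $-1\in\Spin_8$ acts trivially on $V$. Stratifying again: on the unit quadric the stabilizer in $\SO_8\times\mu_2$ is $\SO_7\times\mu_2$ by the analysis of Section~\ref{sec:reps.on} (for even $n$), and I would show that its preimage in $\Spin_8\times\mu_2$ under the central isogeny is the direct product $\Spin_7\times\mu_2$, using that the Clifford volume element $\omega=e_1\cdots e_8\in\Spin_8$ lifts $-\mathrm{id}\in\SO_8$, satisfies $\omega^2=1$, and commutes with $\Spin_7\subset\Spin_8$ (a direct computation inside the Clifford algebra). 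On the null cone the $\SO_8$-stabilizer is $\SO_6\ltimes k^6$ by Section~\ref{sec:reps.on}, and since the unipotent factor $k^6$ is simply connected it lifts uniquely, giving $\Spin_6\ltimes k^6$ as the preimage in $\Spin_8$. Because $\Spin_6\cong\SL_4$ is special, Propositions~\ref{prop:bg} and~\ref{prop:linear} give $\{\B(\Spin_6\ltimes k^6)\}=\LL^{-6}\{\Spin_6\}^{-1}$, and the general setup produces
$$
\{\B\Spin_8\}(\LL^8-1)=\LL^{-6}\{\Spin_6\}^{-1}+(\LL-1)\{\B\Spin_7\}.
$$
Substituting the value of $\{\B\Spin_7\}$ from the previous step and simplifying using Proposition~\ref{prop:formulas} should yield $\{\B\Spin_8\}=\{\Spin_8\}^{-1}$.

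The main obstacle I foresee is the $\Spin_7$ computation, specifically identifying the stabilizer $G'$ on the null cone inside the Clifford algebra and extracting a manageable form for its class. In contrast with the vector representations of $\SO_n$, where the null-vector stabilizer is a transparent semidirect product of a classical group with a vector group, in the spin representation of $\Spin_7$ the stabilizer lives more obscurely inside the Clifford algebra, and I may well have to stratify the null cone further (for instance by exploiting the $G_2$-decomposition $V\cong k\oplus\Im(\mathbb{O})$) to put every stratum in a form to which Propositions~\ref{prop:linear} and~\ref{prop:unipotent} apply directly. A secondary technical point is the careful Clifford-algebraic bookkeeping needed to verify that each $\Spin_n\times\mu_2$-stabilizer is a genuine direct product rather than a nontrivial central extension.
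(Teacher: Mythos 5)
Your plan follows the paper's proof essentially step for step: the spin representation of $\Spin_7$ and the vector representation of $\Spin_8$, the same stratification by null cone and unit quadric, the same stabilizers ($G_2\times\mu_2$ and, for $\Spin_8$, $\Spin_7\times\mu_2$ via the Clifford volume element $\eta=e_1\cdots e_8$), and the same reduction to Propositions~\ref{prop:bg}, \ref{prop:linear}, \ref{prop:unipotent} and~\ref{prop:formulas}. The one point you flag as an obstacle — identifying the $\Spin_7$-stabilizer of a pure spinor — the paper resolves not by further stratification but by citing Igusa's classical result that the orbit is a single one and the stabilizer is $H\rtimes\SL_3$ with $H$ unipotent of dimension $6$, so your anticipated structure $L\ltimes U$ with $L$ reductive and $U$ unipotent is exactly right.
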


\begin{proof}
Let us prove the result for $\Spin_7$ first. Consider the spin representation $V\cong k^8$ (see \cite[Section 9]{vistoli-spin}), where the basis has been chosen so that the action of $\Spin_7$ preserves the standard quadratic form $q(x_1,\hdots,x_7)=\sum_{i=1}^{7} x_i^2$.

We will stratify again the space $k^8$ as explained in (\ref{sec:setup}). Denote by $B_8, C_8$ and $Q_8$ the relevant loci in this case. As usual we have an isomorphism $[B_8/\Spin_7]\cong [(Q_8\times\GG_m)/(\Spin_7\times \mu_2)]$ where the action of $\mu_2$ is by scaling, and we obtain
\begin{equation}\label{eq:20}
\{\B\Spin_7\}(\LL^8-1)=\{[C_8/\Spin_7]\}+(\LL-1)\{[Q_8/(\Spin_7\times \mu_2)]\}.
\end{equation}
Now by {\cite[Proposition 4]{igusa}}, the action of $\Spin_7\times \mu_2$ on $Q_8$ is transitive, and the stabilizer is isomorphic to $G_2\times \mu_2$, {embedded in $\Spin7\times \mu_2$ via
$$
(g,1)\mapsto (g,1), \quad (g,-1)\mapsto ((-1)\cdot g, -1),
$$
where we are including $G_2\subset \Spin_7$ as the stabilizer of $e_1$, and the $(-1)$ in the last formula is the generator of the kernel of $\Spin_7\to \SO_7$.}
We deduce that
\begin{equation}\label{eq:21}
\{[Q_8/(\Spin_7\times \mu_2)]\}=\{\B(G_2\times\mu_2)\}=\LL^{-6}(\LL^6-1)^{-1}(\LL^2-1)^{-1} 
\end{equation}
where we used the fact that $\B(G_2\times \mu_2)\cong \B G_2\times \B\mu_2$, $\{\B\mu_2\}=1$ \cite[Proposition 3.2]{ekedahl-finite-group} and Theorem \ref{prop:g2}.

Now let us consider the first term. Again by {\cite[Proposition 4]{igusa}}, the action of $\Spin_7$ on $C_8$ is transitive, {and the stabilizer is a semidirect product $H\rtimes \SL_3$, where $H$ is a connected unipotent group of dimension $6$. By Proposition \ref{prop:unipotent} we have $\{\B H\}=\LL^{-6}$, and from $\{\GL_3\}=(\LL^3-1)(\LL^3-\LL)(\LL^3-\LL^2)$ and the fact that the determinant $\GL_3\to \bG_m$ is an $\SL_3$-torsor, we obtain
$$
\{\SL_3\}=\LL^3(\LL^3-1)(\LL^2-1).
$$
Moreover, the map $\B H\to \B(H\rtimes \SL_3)$ is also an $\SL_3$-torsor, and hence $\{\B H\}=\{\SL_3\}\cdot \{\B(H\rtimes \SL_3)\}$.
} Consequently for the {term $\{[C_8/\Spin_7]\}$} we obtain
$$
\{[C_8/\Spin_7]\}=\{\B(H\rtimes \SL_3)\}=\{\B H\}\cdot \{\SL_3\}^{-1}=\LL^{-9}(\LL^3-1)^{-1}(\LL^2-1)^{-1}.
$$
Combining the last formula with (\ref{eq:20}) and (\ref{eq:21}) gives
\begin{align*}
\{\B\Spin_7\} & =  (\LL^8-1)^{-1}\Big(\LL^{-9}(\LL^3-1)^{-1}(\LL^2-1)^{-1}+(\LL-1)\LL^{-6}(\LL^6-1)^{-1}(\LL^2-1)^{-1}   \Big)\\
&=  (\LL^8-1)^{-1}\LL^{-9}(\LL^2-1)^{-1}(\LL^6-1)^{-1}\Big( (\LL^3+1) + (\LL-1)\LL^3 \Big)\\
& =  \LL^{-9}(\LL^2-1)^{-1}(\LL^4-1)^{-1}(\LL^6-1)^{-1}.
\end{align*}
By comparing with the formula for $\{\Spin_7\}$ given by Proposition \ref{prop:formulas} we see that indeed $\{\B\Spin_7\}=\{\Spin_7\}^{-1}$.

Now let us turn to $\Spin_8$. Let us consider the linear representation on $V=k^8$ induced by the homomorphism $\Spin_8\to \SO_8$, and let us stratify the space $k^8$ as usual (we will study this situation for $\Spin_n$ with $n$ arbitrary in the next section). Let us denote again by $B_8, C_8$ and $Q_8$ the relevant strata. As in (\ref{sec:setup})
we have
\begin{equation}
\label{eq:10}
\{\B\Spin_8\}(\LL^8-1)=\{[C_8/\Spin_8]\}+(\LL-1)\{[Q_8/\Spin_8\times\mu_2]\}
\end{equation}
where $\mu_2$ acts by scaling. Moreover the action of $\Spin_8$ on $C_8$ and on $Q_8$ is transitive, {since the action of $\SO_8$ is}.

The stabilizer of the action of $\Spin_8\times\mu_2$ on $Q_8$ is isomorphic to $\Spin_7\times\mu_2$, {embedded via
$$
(g,1)\mapsto (g,1), \quad (g,-1)\mapsto (\eta \cdot g, -1),
$$
where we are including $\Spin_7\subset \Spin_8$ as the stabilizer of $e_1$, and $\eta = e_1\cdots e_8$ is the element of order $2$ of $\Spin_8$ that generates its center, together with $(-1)$.} Hence
\begin{equation}
\label{eq:11}
\{[Q_8/\Spin_8\times\mu_2]\}=\{\B(\Spin_7\times \mu_2)\}=\LL^{-9}(\LL^2-1)^{-1}(\LL^4-1)^{-1}(\LL^6-1)^{-1}.
\end{equation}
As for the first term, the stabilizer of $\Spin_8$ on $C_8$ is a semidirect product $\Spin_6\ltimes H$, where $H$ is a group scheme isomorphic to a $6$-dimensional vector space with addition (we will generalize this statement, in Proposition \ref{prop:nullcone} below). Using Proposition \ref{prop:linear} and Proposition \ref{prop:bg} (recall that $\Spin_6$ is special), we obtain
\begin{align*}
\{[C_8/\Spin_8]\} & =\{\B(\Spin_6\rtimes H)\} =  \LL^{-6}\{\B\Spin_6\} \\ & =\LL^{-12}(\LL^3-1)^{-1}(\LL^2-1)^{-1}(\LL^4-1)^{-1}
\end {align*}
We conclude the proof by plugging the last formula and (\ref{eq:11}) into (\ref{eq:10}), and comparing with Proposition \ref{prop:formulas}.
\end{proof}

%%%%%%%%%%%%%%%%%%%%%%%%%%%%%%%%%%%%%%%

\section{On the class of $\B\Spin_n$}

In this section we show that the computation of the class $\{\B\Spin_n\}$ in the Grothendieck ring of stacks $\kstack$ can be reduced to the computation of the classes $\{\B \Delta_n\}$ for various values of $n$, where $\Delta_n\subset \Pin_n$ is the {``extraspecial $2$-group'',} the finite subgroup of preimages of the diagonal matrices in $\O_n$ via the projection $\Pin_n\to \O_n$. Our main result here is that $\{\B\Spin_n\}=\{\Spin_n\}^{-1}$ is true for every $n\geq 2$ if and only if $\{\B \Delta_n\}=1$ for every $n$ (Corollary \ref{cor:main}). Note that for $n=1$ we have $\Spin_1\cong \mu_2$ (which is not connected), and $\{\B\Spin_1\}\cdot \{\Spin_1\}=2\neq 1$ in $\kstack$.

More precisely, because $\Spin_n$ is special for $2\leq n\leq 6$ and from the results of the previous section, we know that $\{\B\Spin_n\}=\{\Spin_n\}^{-1}$ holds for $2\leq n\leq 8$, and from this we can deduce that $\{\B \Delta_n\}=1$ for $n\leq 7$. Our result gives that, if $\{\B \Delta_n\}\neq 1$ for some $n$, and we let $n_0$ be the minimum positive integer for which this happens, then $\{\B\Spin_{n_0+1}\}\neq \{\Spin_{n_0+1}\}^{-1}$.

\begin{remark}
The fact that $\{\B G\}=1$ in $\kstack$ for a finite group $G$ might have some connection with stable rationality of quotients of faithful finite-dimensional representations of $G$, see the discussion in \cite[Section 6]{ekedahl-finite-group}. This problem for the {extraspecial $2$-group} $\Delta_n$ mentioned above is equivalent to the Noether problem for spin groups (as explained for example in \cite[Section 1.5.4]{bohing-rationality}), which is still open, and expected to have a negative answer for $\Spin_n$ with $n\geq 15$ (see \cite[Conjecture 4.5]{merk}).

A proof of the fact that $\{\B \Delta_n\}\neq 1$ for some $n$, although it would not directly imply a negative result of this kind, would certainly corroborate the expectation, and, should the relationship between the two phenomena be made precise, could possibly provide a line of attack for the rationality problem. 
\end{remark}

\subsection{Representations of $\Spin_n$ and $\Pin_n$ and stabilizers} In order to obtain information about the class $\{\B\Spin_n\}$, we will make use of the linear representations of $\Pin_n$ and $\Spin_n$ induced by the projections $\Pin_n\to \O_n$ and $\Spin_n\to \SO_n$ and the tautological representation of $\O_n$ and $\SO_n$ (for which we described the orbit structure and stabilizers in (\ref{sec:reps.on})).

This time assume that the quadratic form $q$ on $V=k^n$ is given by $$q(x_1,\cdots, x_n)=-(x_1^2+\cdots +x_n^2).$$ This quadratic form is more convenient if we want to do computations with (s)pin groups and the Clifford algebra. Since $k$ {has a square root of $-1$,} we can freely pass from between bases for which the quadratic form is given by the form $q_n$ in (\ref{sec:reps.on}) and for which it is given by this $q$. 

As in $(\ref{sec:reps.on})$, we will denote by $C$ the punctured null-cone $\{0\neq v\in V \mid q(v)=0\}$, by $B$ the complement $V\setminus \overline{C}$,  and by $Q$ the non-singular quadric $\{v\in V\mid q(v)=-1\}$ (the change of sign here will not be relevant). Assume that $n\geq 2$ if we are considering $\O_n$ and $n\geq 3$ if we are considering $\SO_n$, so that the actions are transitive on $C$.

\subsubsection{Stabilizers on the smooth quadric}\label{sec:smooth.stab}

Let us start by looking at the action of $\Spin_n$ on $V$. Since the action is via the projection $\rho_n\colon \Spin_n\to \SO_n$, the orbits are the same as for the action of $\SO_n$, and the stabilizers are preimages of stabilizers.

We will show later that the stabilizer of a vector in $C$ is isomorphic to a semidirect product $\Spin_{n-2}\ltimes W$, where $W=\langle e_1,e_2\rangle^\perp$ with its linear structure and $\Spin_{n-2}$ acts on $W$ via $\Spin_{n-2}\to \SO_{n-2}=\SO(W)$.

Let us consider now the action of $\Spin_n\times \mu_2$ on $V$, where $\mu_2$ acts by multiplication by $-1$, as usual.

\begin{proposition}
The stabilizer $G$ of $e_1\in Q$ for the action of $\Spin_n\times \mu_2$ on $V$ is isomorphic to $\Pin_{n-1}$.
\end{proposition}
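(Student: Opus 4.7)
My plan is to build an explicit isomorphism $\phi\colon \Pin_{n-1}\xrightarrow{\sim} G$, using the realization of $\Pin_{n-1}$ as the subgroup of $\Pin_n$ generated by unit vectors in $W'=\langle e_2,\dots,e_n\rangle\subset V$.

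First I would unpack the stabilizer condition. The action is $(\alpha,\xi)\cdot v=\xi\rho_n(\alpha)(v)=\xi\alpha v\overline{\alpha}$, so $(\alpha,\xi)\in G$ iff $\rho_n(\alpha)(e_1)=\xi e_1$. Projecting $G\to \mu_2$ onto the second factor then gives a short exact sequence
\[
1\longrightarrow \Spin_{n-1}\longrightarrow G\longrightarrow \mu_2\longrightarrow 1,
\]
whose kernel is the preimage in $\Spin_n$ of the copy $\SO_{n-1}\subset \SO_n$ fixing $e_1$ (i.e.\ $\Spin_{n-1}$ generated by $e_2,\ldots,e_n$), and whose rightmost map is surjective because e.g.\ $(e_1e_2,-1)\in G$, as one checks directly from $(e_1e_2)\overline{e_1e_2}=1$ and $\rho_n(e_1e_2)(e_1)=-e_1$.

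Next I would define $\phi$ piecewise: $\phi(\beta)=(\beta,1)$ for $\beta\in\Spin_{n-1}$ (even), and $\phi(\beta)=(e_1\beta,-1)$ for $\beta\in\Pin_{n-1}\setminus\Spin_{n-1}$ (odd). In the odd case $e_1\beta$ is even, lies in $\Spin_n$, and satisfies $\rho_n(e_1\beta)(e_1)=\rho_n(e_1)\bigl(\rho_n(\beta)(e_1)\bigr)=\rho_n(e_1)(e_1)=-e_1$, so $\phi$ lands in $G$. The crux is that $\phi$ is a group homomorphism; the only delicate case is $\beta_1,\beta_2$ both odd, which reduces to the identity $e_1\beta_1 e_1=\beta_1$. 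This follows from the anticommutation $\beta_1 e_1=(-1)^k e_1\beta_1$ for $\beta_1$ a product of $k$ vectors in $W'$ (each anticommuting with $e_1$) together with $e_1^2=-1$; the mixed-parity cases are analogous.

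Finally, injectivity is immediate and surjectivity follows from the exact sequence: given $(\gamma,-1)\in G$, the element $\beta:=-e_1\gamma$ is odd, lies in $\Pin_n$ (since $\beta\overline{\beta}=1$ and $\rho_n(\beta)\in \O_n$), and satisfies $\rho_n(\beta)(e_1)=e_1$, hence $\beta\in \Pin_{n-1}$ with $\phi(\beta)=(\gamma,-1)$. The main technical obstacle is bookkeeping of the three Clifford involutions $\epsilon,(-)^{\mathrm t},\overline{(-)}$ and the attendant sign conventions, but this becomes routine once the anticommutation $vw=-wv$ for orthogonal $v,w$ is used systematically.
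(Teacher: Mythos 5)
Your proof is correct and is essentially the paper's own argument: you construct the same map $\phi\colon\Pin_{n-1}\to G$ (sending even $\beta$ to $(\beta,1)$ and odd $\beta$ to $(e_1\beta,-1)$) and verify the homomorphism property via the same anticommutation identities $\beta e_1=\pm e_1\beta$ together with $e_1^2=-1$ in the Clifford algebra. The only cosmetic difference is that you avoid the paper's case split on the parity of $n$ by working directly with the condition $\rho_n(\alpha)(e_1)=\xi e_1$, rather than first describing the stabilizer inside $\SO_n\times\mu_2$ via the parity-dependent embedding of $\O_{n-1}$.
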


\begin{remark}
In the proof of Theorem \ref{prop:spin78} we used the fact that this stabilizer for $n=8$ is also isomorphic to $\Spin_7\times \mu_2$. It is not true though that $\Pin_{n}\cong \Spin_{n}\times \mu_2$ for an arbitrary $n$, and this complicates the situation in the general case.
\end{remark}

\begin{proof}
The action of $\Spin_n\times \mu_2$ is via the map $\rho_n\times \id\colon \Spin_n\times \mu_2\to \SO_n\times \mu_2$, so the stabilizer is going to be the preimage of the stabilizer in $\SO_n\times \mu_2$, which is a always a copy of $\O_{n-1}$, but the embedding in $\SO_n\times \mu_2$ is different according to the parity of $n$.

If $n$ is even, then $\O_{n-1}\cong \SO_{n-1}\times \mu_2\subset \SO_n\times \mu_2$ embedded via $(M,\xi)\mapsto (\xi\cdot i(M),\xi)$, where $i\colon \SO_{n-1}\subset \SO_n$ is the inclusion as the stabilizer of $e_1$. Consequently, the stabilizer $G$ of $e_1$ in $\Spin_n\times \mu_2$ is the subgroup of of elements $(\alpha, \xi)$ such that $\rho_n(\alpha)=\xi\cdot i(M)$ for a (uniquely determined) element $M\in \SO_{n-1}$.

If $\xi=1$, this says that $\alpha\in \Spin_{n-1}$, embedded in $\Spin_n$ as the stabilizer of $e_1$. If $\xi=-1$, we get $\rho_n(\alpha)=-i(M)$, which can be written as $\sigma_1\circ i(-M)$, where $\sigma_1$ is the reflection across the hyperplane orthogonal to $e_1$ (i.e. the diagonal matrix with first entry $-1$, and all other entries equal to $1$). This implies that $\alpha=e_1\cdot \beta$, with $\beta \in \Pin_{n-1}\setminus \Spin_{n-1}$, since $\det(-M)=-\det(M)=-1$ because $n-1$ is odd. Here we are considering $\Pin_{n-1}\subset \Pin_n$ again as the stabilizer of $e_1$. 

Let us consider the function $\phi\colon \Pin_{n-1}\to G\subset \Spin_n\times \mu_2$ given by $\beta \mapsto ( \beta , 1)$ if $\det(\rho_{n-1}(\beta))=1$ (so that actually $\beta\in \Spin_{n-1}\subset \Pin_{n-1}$), and $\beta \mapsto (e_1\cdot \beta, -1)$ if $\det(\rho_{n-1}(\beta))=-1$. We claim that this is an isomorphism.

It is clearly injective and surjective, so we just have to check that it is a homomorphism. Let us consider $\beta, \beta'\in \Pin_{n-1}$, and let us compare $\phi(\beta)\phi(\beta')$ and $\phi(\beta\beta')$. The components in the $\mu_2$ factor are obviously equal, so let us only worry about the component in $\Spin_n$. We have $4$ cases to treat:
\begin{itemize}
\item $\det(\rho_{n-1}(\beta))=\det(\rho_{n-1}(\beta'))=1$: \\ we have $\phi(\beta)=\beta, \phi(\beta')=\beta'$ and $\phi(\beta\beta')=\beta\beta'$, so $\phi(\beta)\phi(\beta')=\phi(\beta\beta')$.

\item $\det(\rho_{n-1}(\beta))=-1$ and $\det(\rho_{n-1}(\beta'))=1$: \\ we have $\phi(\beta)=e_1\cdot \beta, \phi(\beta')=\beta'$ and $\phi(\beta\beta')=e_1\cdot \beta\beta'$, so $\phi(\beta)\phi(\beta')=\phi(\beta\beta')$.

\item $\det(\rho_{n-1}(\beta))=1$ and $\det(\rho_{n-1}(\beta'))=-1$: \\ in this case $\phi(\beta)=\beta, \phi(\beta')=e_1\cdot \beta'$, so $\phi(\beta)\phi(\beta')=\beta\cdot e_1\cdot \beta'$, and $\phi(\beta\beta')=e_1\cdot \beta\beta'$. Note that $\beta\cdot e_1=e_1\cdot \beta$ (because $\beta\in \Spin_{n-1}$ can be written in the Clifford algebra as sum of products of an even number of $e_i$ with $i\geq 2$, and each of these anticommutes with $e_1$), so we find again that $\phi(\beta)\phi(\beta')=\phi(\beta\beta')$.

\item $\det(\rho_{n-1}(\beta))=\det(\rho_{n-1}(\beta'))=-1$: \\ in this case $\phi(\beta)=e_1\cdot \beta, \phi(\beta')=e_1\cdot \beta'$, so $\phi(\beta)\phi(\beta')=e_1\cdot \beta\cdot e_1\cdot \beta'$, and  $\phi(\beta\beta')=\beta\beta'$. Since $\beta \in \Pin_{n-1}\setminus \Spin_{n-1}$, in this case $\beta \cdot e_1=-e_1\cdot \beta$, so $e_1\cdot \beta\cdot e_1\cdot \beta'=-e_1^2\cdot \beta\beta'=\beta\beta'$, and we find again that $\phi(\beta)\phi(\beta')=\phi(\beta\beta')$.
\end{itemize}
The proof for odd $n$ is completely analogous, so we omit it.
\end{proof}

Because of the preceding proposition, if we want to compute $\{\B\Spin_n\}$ we also have to analyze the action of $\Pin_n$ on $V$, via the projection $\rho_n\colon \Pin_n\to \O_n$. As for $\Spin_n$, we will prove in the next section that the stabilizer of a vector in $C$ is isomorphic to a semidirect product $\Pin_{n-2}\ltimes W$ with a vector space. 

The situation for the action of $\Pin_n\times \mu_2$ on the smooth quadric is more complicated. Let $G_{n,1}\subset \Pin_n\times \mu_2$ be the stabilizer of $e_1$. Note that the elements of $G_{n,1}$ are exactly the pairs $(\alpha, \xi)$ such that $\rho_n(\alpha)(e_1)=\pm e_1$, and $\xi$ is uniquely determined by the sign in the last formula (we can succinctly write $\rho_n(\alpha)(e_1)=\xi\cdot  e_1$). So the composite $G_{n,1}\to \Pin_n$ is injective, and we can think of $G_{n,1}$ as the subgroup of $\Pin_n$ that stabilizes the set $\{\pm e_1\}$.

The group $G_{n,1}$ contains the stabilizer of $e_1$ in $\Pin_n$, which is a copy of $\Pin_{n-1}$, and the quotient is a copy of $\mu_2$ (the map $G_{n,1}\to \mu_2$ is the projection to the second factor of $\Pin_n\times \mu_2$). So there is a short exact sequence 
$$
\xymatrix{
0\ar[r] & \Pin_{n-1} \ar[r] & G_{n,1} \ar[r] & \mu_2 \ar[r] & 0
}
$$
which in general does not split.

The group $G_{n,1}$ acts on the orthogonal $V'=\langle e_1\rangle^\perp\cong k^{n-1}$ via $\O_{n-1}$, and we can consider the orbits and stabilizers for this action. On the locus $C$, as for $\Spin_n$ and $\Pin_n$, we will prove that the stabilizer is isomorphic to a semidirect product $G_{n-2,1} \ltimes W$ with a vector space. As before, the stabilizer of the action of $G_{n,1}\times \mu_2\subset \Pin_n\times \mu_2^2$ on $Q$ is more complicated. Denote this stabilizer by $G_{n,2}$. This is the subgroup of triples $(\alpha, \xi_1,\xi_2)\in \Pin_n\times \mu_2^2$ such that $\rho_n(\alpha)(e_i)=\xi_i\cdot e_i$ for $i=1,2$, and can be identified by the subgroup of $\Pin_n$ that fixes the sets $\{\pm e_1\}$ and $\{\pm e_2\}$. This group is an extension
$$
\xymatrix{
0\ar[r] & \Pin_{n-2} \ar[r] & G_{n,2} \ar[r] & \mu_2^2 \ar[r] & 0.
}
$$
Let us iterate this: let $G_{n,r}\subset  \Pin_n\times \mu_2^r$ be the subgroup of elements $(\alpha,\xi_1,\hdots,\xi_r)$ such that $\rho_n(\alpha)(e_i)=\xi_i \cdot e_i$ for all $i=1,\hdots, r$. This can be seen as the subgroup of $\Pin_n$ of elements that fix the sets $\{\pm e_1\},\hdots, \{\pm e_r\}$, and it is an extension
$$
\xymatrix{
0\ar[r] & \Pin_{n-r} \ar[r] & G_{n,r} \ar[r] & \mu_2^r \ar[r] & 0.
}
$$
There is a natural surjective homomorphism $G_{n,r}\to \O_{n-r}$, and $G_{n,r}$ acts on $\langle e_1,\hdots, e_r\rangle^\perp\cong k^{n-r}$ via this map (preserving the induced quadratic form). Using the usual stratification for this action on $k^{n-r}$, we will show that the stabilizer of a point of $C$ is isomorphic to a semidirect product  $G_{n-2,r} \ltimes W$ with a vector space (note that we assumed $n\geq 2$, so $n-2\geq 0$, and we are implicitly assuming that $r\leq n-2$). The stabilizer of a point of $Q$ in $G_{n,r}\times \mu_2$ is a copy of $G_{n,r+1}$. Note that for $r=n-1$, we have $G_{n,n-1}=G_{n,n}$: indeed, an element of $\O_n$ that sends $e_i$ to $\pm e_i$ for $i=1,\hdots, n-1$ also has to send $e_n$ to $\pm e_n$.

For $r=n$ we obtain a finite group $\Delta_n=G_{n,n}$ of cardinality $2^{n+1}$, that is an extension
$$
\xymatrix{
0\ar[r] & \Pin_{0}\cong \mu_2 \ar[r] & \Delta_n \ar[r] & \mu_2^n \ar[r] & 0
}
$$
and that coincides with the preimage of the group of the diagonal matrices in $\O_n$ along the map $\rho_n\colon \Pin_n\to \O_n$. Note that $\Delta_n\subset \Pin_n$ is also isomorphic to the preimage of the diagonal matrices of $\SO_{n+1}$ in $\Spin_{n+1}$, by sending the element $e_i \in C_n$ to $e_ie_{n+1}\in C_{n+1}$ (recall that $C_n$ denotes the Clifford algebra). This finite subgroup is quite well-understood, and is responsible in particular for the exponential growth of the essential dimension of spin groups (see \cite{BRV}).

\subsubsection{Stabilizers on the punctured null-cone}\label{sec:stab.nullcone}

Now let us consider the stabilizer of a vector in the punctured null-cone $C\subset V$ for the action of $\Spin_n$, or $\Pin_n$, or one of the groups $G_{N,r}$ with $1\leq r\leq N$ introduced in the previous section. Since the proof will be the same in all cases, let us set $G_n$ to be one of these groups (and then $G_{n-2}$ will be the same type of group, with the index shifted down by $2$).

\begin{proposition}\label{prop:nullcone}
The stabilizer of a point in $C$ for the action of $G_n$ is isomorphic to a semidirect product $G_{n-2} \ltimes W$ with a vector space $W$ of dimension $n-2$ (that we see as an algebraic group via its linear structure), where the action of $G_{n-2}$ on $W$ is linear.
\end{proposition}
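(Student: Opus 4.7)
The plan is to combine the description of the stabilizer in $\O(V)$ from (\ref{sec:reps.on}) with an explicit lift to the Clifford algebra. The argument goes through uniformly in all three cases; I will state it for $G_n = \Pin_n$ acting on $V = k^n$, and indicate the adaptations at the end. First, fix a null vector $v_0 \in C$, a dual null vector $v_0' \in C$ with $h(v_0, v_0') \neq 0$, and set $W = \langle v_0, v_0'\rangle^\perp$, a non-degenerate $(n-2)$-dimensional quadratic subspace of $V$. By (\ref{sec:reps.on}), the stabilizer of $v_0$ in $\O(V)$ is $\O(W) \ltimes W$, with $W$ acting through the transvections $\phi_w$ described there.

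The central step is to construct an explicit lift $\iota\colon W \to \Spin(V)$ by the formula
$$
\iota(w) = 1 + v_0 \cdot w \in C_n.
$$
Using $q(v_0) = 0$ and $h(v_0, w) = 0$ for $w \in W$, a direct Clifford algebra computation shows that $\iota(w)$ is even, that $\iota(w)\overline{\iota(w)} = 1$, and that conjugation preserves $V$ with $\rho_n(\iota(w)) = \phi_w$ (one verifies the three cases $v = v_0$, $v = v_0'$, $v \in W$ separately). Moreover the vanishing $v_0 w v_0 w' = -q(v_0) ww' = 0$ for $w,w' \in W$ gives
$$
\iota(w)\iota(w') = 1 + v_0(w + w') = \iota(w+w'),
$$
so $\iota$ is an injective group homomorphism lifting the transvection action.

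Next, the subalgebra inclusion $C(W) \subset C(V)$ restricts to $\Pin(W) \hookrightarrow \Pin(V)$ compatibly with $\rho_n$, and the image lies in the stabilizer of both $v_0$ and $v_0'$. Since $v_0$ anticommutes with every generator of $C(W)$, one has $\alpha v_0 \alpha^{-1} = \pm v_0$ depending on the parity of $\alpha \in \Pin(W)$; combined with the parallel identity for $w \in W$ this yields
$$
\alpha \iota(w) \alpha^{-1} = \iota(\rho_n(\alpha)(w)),
$$
the two signs cancelling. Thus $\Pin(W)$ normalizes $\iota(W)$ through its natural action, producing a homomorphism $\Pin(W) \ltimes W \to \tilde{H}$ into the stabilizer $\tilde H \subset \Pin(V)$ of $v_0$. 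Both sides are $\mu_2$-extensions of $\O(W) \ltimes W$ with the central $\mu_2$ already contained in $\Pin(W)$, so this map is an isomorphism.

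For $\Spin_n$, restricting to the even subgroup gives the stabilizer $\Spin(W) \ltimes W$ (since $\iota(W) \subset \Spin(V)$ and $\Pin(W) \cap \Spin(V) = \Spin(W)$). For $G_{N,r}$ acting on $V = \langle e_1, \ldots, e_r\rangle^\perp \cong k^n$ with $n = N-r$, I choose $v_0, v_0', W$ orthogonal to $\langle e_1, \ldots, e_r\rangle$, so that both $\iota(W)$ and $\Pin(W)$ fix each $e_i$ for $i \leq r$ and embed into $G_{N,r}$ via $\alpha \mapsto (\alpha, 1, \ldots, 1)$; the same $\mu_2$-extension argument identifies the stabilizer with $G_{N-2, r} \ltimes W$. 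The one delicate step is the Clifford-algebra verification that $\iota(w) = 1 + v_0 w$ is a homomorphism from $W$ into $\Spin(V)$ lifting $\phi_w$ and that conjugation by $\Pin(W)$ acts correctly on it; the remainder of the proof is bookkeeping of $\mu_2$-extensions.
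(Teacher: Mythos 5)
Your construction is essentially the paper's: the lift $\iota(w) = 1 + v_0 w$ is the same element as the paper's $f(w) = wf_1 + 1$ up to the harmless sign $v_0 w = -wv_0$, and the key identity $\iota(w)\iota(w') = \iota(w+w')$ and the conjugation computation $\alpha\iota(w)\alpha^{-1} = \iota(\rho(\alpha)(w))$ (``the two signs cancelling'') are exactly the computations the paper carries out. Where you genuinely differ is in how the isomorphism is finished off: the paper checks normality of $f(W)$ in the full stabilizer $S$ by computing $\alpha f(w)\alpha^{-1}$ for a general $\alpha \in S$ and distinguishing between the two preimages $\pm f(w'')$ of $\phi_{w''}$, while you instead build the subgroup $\Pin(W)\ltimes\iota(W)$ directly and invoke a five-lemma comparison of $\mu_2$-extensions of $\O(W)\ltimes W$. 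The latter is cleaner for the $\Pin_n$ and $\Spin_n$ cases and avoids the preimage case analysis, so it is a genuine (if minor) streamlining.

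There is however a gap in the case $G_n = G_{N,r}$ with $r \geq 1$. You embed only $\Pin(W)\ltimes W$ into $G_{N,r}$, via $\alpha \mapsto (\alpha, 1, \ldots, 1)$ with trivial $\mu_2^r$-component, and then claim that ``the same $\mu_2$-extension argument identifies the stabilizer with $G_{N-2,r}\ltimes W$.'' But for $r \geq 1$ the stabilizer $S\subset G_{N,r}$ is \emph{not} a $\mu_2$-extension of $\O(W)\ltimes W$: the kernel of $S \to \O(W)\ltimes W$ has order $2^{r+1}$, coming from the $2^r$ choices of signs on $e_1,\dots,e_r$ times the two lifts in $\Pin_N$. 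So $\Pin(W)\ltimes W$ is a proper subgroup of $S$ of index $2^r$, the five lemma with $\mu_2$ does not apply, and what you need to embed is the larger group $G_{N-2,r}$, which contains $\Pin(W)$ together with elements involving $e_1,\dots,e_r$. The fix is straightforward and is essentially contained in the paper's treatment of this case: since $v_0$ is orthogonal to all of $\langle e_1,\dots,e_r\rangle\oplus W$, the anticommutation/parity argument for the conjugation formula $\alpha\iota(w)\alpha^{-1} = \iota(\rho(\alpha)(w))$ goes through verbatim for any $\alpha$ that is a product of vectors in $\langle e_1,\dots,e_r\rangle\oplus W$, i.e.\ for all of $G_{N-2,r}$, not just $\Pin(W)$ (and indeed for $\alpha = e_i$ with $i\leq r$ one gets $e_i\iota(w)e_i^{-1} = \iota(w)$). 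One then compares $G_{N-2,r}\ltimes W$ and $S$ as extensions of $\O(W)\ltimes W$ by the same kernel of order $2^{r+1}$ and checks the kernel map is an isomorphism; as written, your argument leaves this out.
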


\begin{proof}
Let $H_n$ denote $\O_n$ if $G_n=\Pin_n$ or $G_{N,r}$ with $N-r=n$, and $\SO_n$ if $G_n=\Spin_n$. Recall that if $S'$ is the stabilizer of a vector in $C$ for the group $H_n$, let us take the specific vector to be $\frac{\sqrt{-1}e_1+e_2}{2}$, then there is a short exact sequence
$$
\xymatrix{
0\ar[r] & W \ar[r] & S'  \ar[r] & H_{n-2} \ar[r] & 0
}
$$
where we have explicit formulas for the linear transformation $\phi_w \in S'\subset H_n$ associated to a vector $w\in W$ (see the description in (\ref{sec:reps.on})).

The stabilizer $S$ of a vector in $C$ for the action of $G_n$ is the preimage of $S'$ via the surjective homomorphism $G_n\to H_n$. Note that there is an injective homomorphism $G_{n-2}\to S$, induced by the diagram \bigskip
$$
\xymatrix{
G_{n-2}\ar@/^1pc/[rr]\ar@{-->}[r]\ar[d]& S\ar[r]\ar[d] & G_n\ar[d]\\
H_{n-2}\ar[r] & S'\ar[r] & H_{n}
}
$$
where the rightmost square and the rectangle are cartesian.

We will prove that
\begin{itemize}
\item the map $W\to S'$ lifts to an (injective) homomorphism $W\to S$,
\item the images of $W$ and of $G_{n-2}$ intersect trivially in $S$, and
\item the image of $W$ in $S$ is a normal subgroup.
\end{itemize}
From this it will follow that we have a diagram with exact rows
$$
\xymatrix{
0\ar[r] & W \ar[r]\ar[d]^{=} & S \ar[d] \ar[r] & G_{n-2} \ar[r]\ar[d] & 0\\
0\ar[r] & W \ar[r] & S'  \ar[r] & H_{n-2} \ar[r] & 0
}
$$
where $S\to G_{n-2}$ has a section, and hence $S\cong G_{n-2}\ltimes W$, as desired. Moreover the action of $G_{n-2}$ on $W$ will be linear, since it is via $H_{n-2}$.

Let us turn to proving the three claims. For this, let $f_1=\frac{\sqrt{-1}e_1+e_2}{2}$ and $f_2=\frac{\sqrt{-1}e_1-e_2}{2}$. These are two isotropic vectors for the quadratic form $q$, and $h(f_1,f_2)=\frac{1}{2}$. Let us define a function $f\colon W\to C_n$ to the Clifford algebra of the quadratic form $q$, by $$f(w)=wf_1+1.$$ We will check that $f$ is a homomorphism to the group $S$, that lifts $W\to S'$.

First we note that $f(w)$ is an element of $\Spin_n\subset \Pin_n$ (possibly $\subset G_{N,r}$ with $n=N-r$): it is an even element, and it is a product of two vectors in $V$ of length $1$, i.e. such that $\|v\|^2=1$, where for $v\in V=k^n$ we write $$\|v\|^2=\|(x_1,\hdots,x_n)\|^2=x_1^2+\cdots x_n^2=-q(v).$$ Namely, if $\|w\|^2\neq 0$, then we can write
$$
f(w)=wf_1+1=\frac{w}{a}\cdot \frac{\|w\|^2f_1-w}{a}
$$
where $a\in k$ is such that $a^2=\|w\|^2$ (recall that in the Clifford algebra $C_n$ we have $v^2=q(v)=-\|v\|^2$ for $v\in V$). Both $\frac{w}{a}$ and $\frac{\|w\|^2f_1-w}{a}$ are vectors in $V$ of length $1$, so $f(w) \in \Spin_n$.

If $\|w\|^2=0$, a computation shows that
$$
f(w)=wf_1+1=\left(e_1-\frac{1}{2}\sqrt{-1}w\right)\cdot e_1 \cdot \left( e_2-\frac{1}{2}w\right)\cdot e_2
$$
in the Clifford algebra $C_n$, and all four terms are vectors of length $1$ in $V$, hence again $f(w)\in \Spin_n$.

Let us check that $f$ is a homomorphism: given $w,w'\in W$, we compute
$$
f(w)f(w')=(wf_1+1)(w'f_1+1)=wf_1w'f_1+(w+w')f_1+1=(w+w')f_1+1=f(w+w')
$$
since $wf_1w'f_1=-ww'(f_1)^2=-ww'q(f_1)=0$, because $f_1$ and $w$ are orthogonal and $f_1$ is isotropic.

Recall now that the image $\rho_n(\alpha)\in\O_n$ of an element $\alpha\in \Pin_n$ is the orthogonal transformation on $V$ given by $\rho_n(\alpha)v=\epsilon(\alpha)v\overline{\alpha}$, where $\epsilon(-)$ and $\overline{(-)}$ are two involutions of the Clifford algebra (see (\ref{sec:clifford})).

Straightforward computations in the Clifford algebra show that the map $f$ lands in the stabilizer of the vector $f_1$, and that the composite $W\to S\to S'$ coincides with the homomorphism $W\to S'$ given by $w\mapsto \phi_w$, where $\phi_w$ is the orthogonal transformation of $V$ described in (\ref{sec:reps.on}). It is also clear that the image of $f$ intersects $G_{n-2}\subset G_n$ trivial (recall that the products $e_{i_1}\cdots e_{i_k}$ are a basis of $C_n$ as a vector space).

The only claim left to prove is that the image of $f$ is a normal subgroup of $S$. Let us pick $\alpha\in S\subseteq G_n$, and check that $\alpha f(w) \alpha^{-1}$ is again in the image of $f$. Note that if $G_n=G_{N,r}$ with $N-r=n$, we can see $\alpha$ as an element of $\Pin_N$, whose associated element of $\O_N$ has a prescribed action on the first $r$ basis elements of $k^N$. In this case we can embed the Clifford algebra $C_n$ into $C_N$ in the natural way, and we will omit the subscript in $\Pin$ in order to treat all cases at once.

Furthermore, note that in the case of $G_{N,r}$ every element $\alpha$ can be written as a product $\alpha=\alpha' e_{i_1}\cdots e_{i_h}$ where $0 < i_1 <\hdots < i_h \leq r$ and $\alpha'\in \Pin_n$ (involving only the remaining generators). Conjugating an element of the form $f(w)$ by such an $\alpha$ gives the same result as conjugating by $\alpha'$, because $e_i (wf_1+1) e_i^{-1}=e_i (wf_1+1) (-e_i)=-e_iwf_1e_i+1=-e_i^2wf_1+1=wf_1+1$ since $e_i$ is orthogonal to both $f_1$ and $w$.

In any case, we know that since $\alpha\in \Pin$, in the corresponding Clifford algebra we have $\alpha\overline{\alpha}=1$, so $\alpha^{-1}=\overline{\alpha}$, and since $\alpha$ stabilizes $f_1$ we have $\epsilon(\alpha)f_1\overline{\alpha}=f_1$. By using the explicit formula for $f(w)$ we obtain $\alpha f(w) \alpha^{-1}=\alpha(wf_1+1)\overline{\alpha}=\alpha wf_1 \overline{\alpha} +1$.

Note that since $\alpha\overline{\alpha}=1$, we also have $\overline{\alpha}\alpha=1$ ($\overline{\alpha}$ is the inverse of $\alpha$ in $\Pin$, and right inverses in groups are also left inverses). By applying the involution $\epsilon$ to the last equality we obtain $\alpha^\mathrm{t}\epsilon(\alpha)=1$.

Consequently we have $$\alpha wf_1 \overline{\alpha}=\alpha w ( \alpha^\mathrm{t}\cdot \epsilon(\alpha))f_1\overline{\alpha}=(\alpha w \alpha^\mathrm{t})\cdot (\epsilon(\alpha)f_1\overline{\alpha})=(\alpha w \alpha^\mathrm{t})\cdot f_1.$$ Now note that $\epsilon(\alpha)(-w)\overline{\alpha}=w'$ is some vector in $V$ since $\alpha\in \Pin$, and
$$
-w'=\epsilon(w')=\epsilon(\epsilon(\alpha)(-w)\overline{\alpha})=\alpha w \alpha^\mathrm{t}
$$
hence $\alpha f(w) \alpha^{-1}=-w' f_1+1$ for some $w'\in V$. Moreover we know that the image $\rho_n(\alpha f(w) \alpha^{-1})= \rho_n(\alpha)\phi_w\rho_n(\alpha^{-1})\in S'$ is again in $W$, since $W\subset S'$ is a normal subgroup, hence is of the form $\phi_{w''}$ for some $w'' \in W$. We furthermore know that $f(w'')=w''f_1+1$ is one of the preimages of $\phi_{w''}$, and the other one has to be $-f(w'')=-w''f_1-1$ (since the kernel of $\Pin\to \O$ is $\{\pm 1\}$).

So either $-w'f_1+1=w''f_1+1$, in which case we are done (and in fact we will necessarily have $w''=w$), or $-w'f_1+1=-w''f_1-1$, which is easily checked to be impossible (write $-w'=ae_1+be_2+x$ with $a,b \in k$ and $x\in W$, and expand).
\end{proof}

\subsection{Putting everything together}

Let us make use of the computations of the previous sections, to obtain a formula for the class of $\B \Spin_n$ in terms of the subgroups that we introduced above. First we consider special cases for low values of $n$: we take $\Pin_0$ to be the trivial group, and we have $\Pin_1\cong \mu_4$, $\Spin_1\cong \mu_2$ and $\Spin_2\cong \gm$. These give $\{\B\Pin_0\}=\{\B\Pin_1\}=\{\B\Spin_1\}=1$ and $\{\B\Spin_2\}=(\LL-1)^{-1}$.

Now let us write down the formulas that we obtain by applying the procedure outlined in (\ref{sec:setup}), and by using the computation of the stabilizers that we carried out in (\ref{sec:smooth.stab}) and (\ref{sec:stab.nullcone}), together with Proposition \ref{prop:linear}.

Let us start with $\B\Spin_n$. We obtain
\begin{align*}
\{\B\Spin_n\}(\LL^n-1) & =\{\B(\Spin_{n-2}\ltimes k^{n-2})\} + (\LL-1)\{\B\Pin_{n-1}\}\\
& =\LL^{-n+2}\cdot \{\B\Spin_{n-2}\}+ (\LL-1)\{\B\Pin_{n-1}\}
\end{align*}
and by iterating, we see that we will be able to compute $\{\B\Spin_n\}$ if we can compute $\{\B\Pin_m\}$ for all $m<n$.

As for $\Pin_n$, we obtain
\begin{align*}
\{\B\Pin_n\}(\LL^n-1) & =\{\B(\Pin_{n-2}\ltimes k^{n-2})\} + (\LL-1)\{\B G_{n,1}\}\\
& =\LL^{-n+2}\cdot \{\B\Pin_{n-2}\}+ (\LL-1)\{\B G_{n,1}\}
\end{align*}
and for the group $G_{n,r}$ for $n\geq 2$, $n-2\geq r$ (so that also $r\leq n-2$),
\begin{align*}
\{\B G_{n,r}\}(\LL^{n-r}-1) & =\{\B(G_{n-2,r}\ltimes k^{n-r-2})\} + (\LL-1)\{\B G_{n,r+1}\}\\
& =\LL^{-n+r+2}\cdot \{\B G_{n-2,r}\}+ (\LL-1)\{\B G_{n,r+1}\}.
\end{align*}
In the special case of $G_{n,n-1}$, we already observed that this group is equal to $G_{n,n}=\Delta_n$, so that $\{\B G_{n,n-1}\}=\{\B G_{n,n}\}=\{\B \Delta_n\}$.

An easy induction argument using these formulas proves the following.

\begin{theorem}\label{thm:main}
The class $\{\B\Spin_n\}$ in $\kstack$ is a linear combination of the classes $\{\B \Delta_m\}$ with $m\leq n-1$, with coefficients in the subring $\Phi_\LL^{-1}\bZ[\LL]$.
Moreover, the coefficient of $\{\B \Delta_{n-1}\}$ in this linear combination is invertible in $\kstack$.
\end{theorem}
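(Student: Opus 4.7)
The plan is to prove the theorem by strong induction on $n$, simultaneously establishing three parallel statements: that $\{\B\Spin_n\}$, $\{\B\Pin_n\}$, and each $\{\B G_{n,r}\}$ with $1\leq r\leq n-1$ all lie in the $\Phi_\LL^{-1}\bZ[\LL]$-linear span of the classes $\{\B\Delta_m\}$ for $m$ bounded respectively by $n-1$, $n$, and $n$. The key inputs are the three recursive formulas displayed just above the statement, together with the identification $\{\B G_{n,n-1}\}=\{\B\Delta_n\}$ which terminates the $r$-chain.

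Base cases come from small $n$: the group $\Spin_n$ is special for $n\leq 6$, so its classifying stack has class $\{\Spin_n\}^{-1}\in \Phi_\LL^{-1}\bZ[\LL]$ by Proposition~\ref{prop:bg} and Proposition~\ref{prop:formulas}, while the low-index $\Pin_n$ and $G_{n,r}$ classes can be treated analogously since the recursion terminates at a finite group $\Delta_m$ with $m$ small. For the inductive step, I would first apply the $\Spin$ recursion, reducing $\{\B\Spin_n\}$ to $\{\B\Spin_{n-2}\}$ (handled by the outer induction) plus a $\Phi_\LL^{-1}\bZ[\LL]$-multiple of $\{\B\Pin_{n-1}\}$. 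The latter is then expanded via the $\Pin$ recursion into $\{\B\Pin_{n-3}\}$ (again by induction) plus a multiple of $\{\B G_{n-1,1}\}$, and the $G$-recursion is iterated in $r=1,2,\dots,n-3$, each step trading one index off the first summand (controlled by the induction on $n$) and advancing the second summand to $\{\B G_{n-1,r+1}\}$. After $n-3$ iterations the chain terminates at $\{\B G_{n-1,n-2}\}=\{\B\Delta_{n-1}\}$. All rescaling factors $\LL^{-j}$, $(\LL-1)$, and $(\LL^j-1)^{-1}$ that appear along the way live in $\Phi_\LL^{-1}\bZ[\LL]$, so the form of the expansion is preserved.

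For the second assertion I would track the coefficient of $\{\B\Delta_{n-1}\}$ explicitly. The induction hypothesis tells us that $\{\B\Spin_{n-2}\}$ is supported on $\{\B\Delta_m\}$ with $m\leq n-3$, so no $\Delta_{n-1}$ contribution can arise from the first summand of the $\Spin$ recursion; likewise $\{\B\Pin_{n-3}\}$ and each $\{\B G_{n-3,r}\}$ are supported on $\Delta_m$ with $m\leq n-3$, so the unique path producing $\{\B\Delta_{n-1}\}$ is the one that repeatedly picks the second summand at each step of the recursion, ending at $\{\B G_{n-1,n-2}\}=\{\B\Delta_{n-1}\}$. Multiplying the resulting factors gives the coefficient
$$\frac{\LL-1}{\LL^n-1}\cdot\prod_{k=2}^{n-1}\frac{\LL-1}{\LL^k-1}=\frac{(\LL-1)^{n-1}}{\prod_{k=2}^{n}(\LL^k-1)},$$
which is a product of elements of $\Phi_\LL$ and hence invertible in $\kstack$.

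The main technical obstacle is not any individual calculation but the bookkeeping required to ensure that no alternative recursive path produces another $\{\B\Delta_{n-1}\}$ contribution which could cancel the explicit one above. This is precisely where the stronger simultaneous induction hypothesis, bounding the maximal Delta-index appearing in $\{\B\Spin_m\}$, $\{\B\Pin_m\}$, and $\{\B G_{m,r}\}$, plays an essential role; once it is in place, the rest of the argument is a routine unwinding of the three recursive formulas.
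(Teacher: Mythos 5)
Your proof is correct and takes essentially the same route as the paper; the coefficient computation $\frac{(\LL-1)^{n-1}}{\prod_{i=2}^n(\LL^i-1)}$ matches exactly. Your explicit simultaneous induction hypothesis bounding the $\Delta$-indices that can appear in the expansions of $\{\B\Spin_n\}$, $\{\B\Pin_n\}$ and the $\{\B G_{n,r}\}$ simply makes precise the paper's assertion that ``it is clear'' that only the chain of second summands produces a $\{\B\Delta_{n-1}\}$ term.
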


\begin{proof}
The first part of the statement is clear from the previous discussion. Let us comment on the second part.

From the recursive formulas the we obtained above, it is clear that in the expression for $\{\B\Spin_n\}$ the only term involving $\{\B \Delta_{n-1}\}$ will come from the term involving $\{\B\Pin_{n-1}\}$, and for each $G_{n,r}$ (including $r=0$), it will come from the term involving $\{\B G_{n,r+1}\}$.

By keeping track of the coefficient for the resulting term, we get
$$
\frac{\LL-1}{\LL^n-1}\cdot \frac{\LL-1}{\LL^{n-1}-1}\cdot \cdots \cdot \frac{\LL-1}{\LL^2-1}=\frac{(\LL-1)^{n-1}}{\prod_{i=2}^n (\LL^i-1)}
$$
which is invertible in $\kstack$.
\end{proof}

The theorem has the following consequences.

\begin{corollary}\label{cor:main}
The formula $\{\B\Spin_n\}=\{\Spin_n\}^{-1}$ holds for every $n\geq 2$ if and only if $\{\B \Delta_n\}=1$ for every $n$.
More precisely, assume that $\{\B \Delta_m\}=1$ for all $m<n-1$. Then $\{\B\Spin_n\}=\{\Spin_n\}^{-1}$ if and only if $\{\B \Delta_{n-1}\}=1$.
\end{corollary}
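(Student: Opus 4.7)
My plan is to split the corollary into the main ``every $n$'' equivalence and the ``more precise'' equivalence at a fixed $n$, and observe that the main equivalence follows from the more precise one by a short induction on $m$: assuming $\{\B\Spin_n\}=\{\Spin_n\}^{-1}$ for all $n\geq 2$, I would induct on $m$ and invoke the more precise statement at $n=m+1$ to deduce $\{\B\Delta_m\}=1$ (the converse implication is symmetric). The real content therefore lies in the ``more precise'' part.

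Under the hypothesis $\{\B\Delta_m\}=1$ for $m<n-1$, Theorem \ref{thm:main} specializes to an identity
\[\{\B\Spin_n\}=c\,\{\B\Delta_{n-1}\}+C\]
in $\kstack$, with $c$ invertible and $C\in\Phi_\LL^{-1}\bZ[\LL]$. For the $(\Leftarrow)$ direction, substituting $\{\B\Delta_{n-1}\}=1$ puts $\{\B\Spin_n\}$ inside $\Phi_\LL^{-1}\bZ[\LL]$, and since $\Spin_n$ is split semisimple (so $\{\Spin_n\}\in\Phi_\LL^{-1}\bZ[\LL]$ by the remark following Corollary \ref{cor:rat.function}), that same corollary yields $\{\B\Spin_n\}=\{\Spin_n\}^{-1}$.

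The $(\Rightarrow)$ direction is where the actual work lies. Assuming $\{\B\Spin_n\}=\{\Spin_n\}^{-1}$ and inverting $c$, I would obtain $\{\B\Delta_{n-1}\}=c^{-1}(\{\Spin_n\}^{-1}-C)$, which already lies in $\Phi_\LL^{-1}\bZ[\LL]$. The main obstacle is passing from this membership to the equality $\{\B\Delta_{n-1}\}=1$, because Corollary \ref{cor:rat.function} is stated only for connected groups whereas $\Delta_{n-1}$ is finite. To handle this I would adapt the strategy used in the proof of Proposition \ref{prop:equal}: apply the Euler characteristic ring homomorphism $\chi_c\colon\kstack\to\widehat{\mathrm{K_0}}(\mathrm{Coh}_k)$ of \cite{ekedahl-grothendieck}. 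For any finite group $G$ of order invertible in $k$ one has $\chi_c(\{\B G\})=1$, since the cohomology of $\B G$ is concentrated in degree zero in the relevant sense; in particular $\chi_c(\{\B\Delta_{n-1}\})=\chi_c(1)$. Invoking \cite[Lemma 3.5]{ekedahl-finite-group}, which asserts that $\chi_c$ is injective on $\Phi_\LL^{-1}\bZ[\LL]$ and is the very tool that made the proof of Proposition \ref{prop:equal} work, then forces $\{\B\Delta_{n-1}\}=1$.
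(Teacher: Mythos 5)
Your proposal follows essentially the same route as the paper: reduce the ``for every $n$'' equivalence to the ``more precise'' equivalence by induction, then use Theorem~\ref{thm:main} to write $\{\B\Spin_n\}=c\,\{\B\Delta_{n-1}\}+C$ with $c$ a unit of $\Phi_\LL^{-1}\bZ[\LL]$ and $C\in\Phi_\LL^{-1}\bZ[\LL]$, and finally pass between ``lies in $\Phi_\LL^{-1}\bZ[\LL]$'' and ``equals the expected value'' on both sides. One small point where you improve on the paper's exposition: the paper cites Corollary~\ref{cor:rat.function} for both $\Spin_n$ and $\Delta_{n-1}$, but that corollary is stated only for connected groups, so its application to the finite group $\Delta_{n-1}$ is not literal. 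You correctly flag this and supply the intended argument, which is the finite-group analogue of Proposition~\ref{prop:equal}: the Euler characteristic $\chi_c$ kills $\{\B G\}$ down to $1$ for a finite group $G$ of order prime to $\operatorname{char} k$ (true for the $2$-group $\Delta_{n-1}$ since $\operatorname{char} k\neq 2$), and \cite[Lemma 3.5]{ekedahl-finite-group} gives injectivity of $\chi_c$ on $\Phi_\LL^{-1}\bZ[\LL]$. This is exactly what the paper is implicitly relying on, and making it explicit is a genuine gain in precision rather than a different approach.
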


\begin{proof}
The second part is immediate from Theorem \ref{thm:main}: if $\{\B \Delta_m\}=1$ for every $m<n-1$, then $\{\B\Spin_n\}$ will be in the subring $\Phi_\LL^{-1}\bZ[\LL]$ of $\kstack$ if and only if $\{\B \Delta_{n-1}\}$ is (because the coefficients in front of these two terms in the equation that relates them are invertible elements of $\Phi_\LL^{-1}\bZ[\LL]$). Also, by Corollary \ref{cor:rat.function} we have $\{\B\Spin_n\} \in \Phi_\LL^{-1}\bZ[\LL]$ if and only if $\{\B\Spin_n\}=\{\Spin_n\}^{-1}$, and $\{\B \Delta_{n-1}\}\in \Phi_\LL^{-1}\bZ[\LL]$ if and only if $\{\B \Delta_{n-1}\}=1$.

For the first part, what we just proved shows that $\{\B \Delta_n\}=1$ for all $n$ implies that $\{\B\Spin_n\}=\{\Spin_n\}^{-1}$ for all $n$. Now assume $\{\B\Spin_n\}=\{\Spin_n\}^{-1}$ for all $n$, and that it is not the case that  $\{\B \Delta_n\}=1$ for all $n$. Take $n_0$ to be the minimum of the natural numbers $n\geq 1$ such that $\{\B \Delta_n\}\neq 1$. Then, because of the second part of the statement, we find $\{\B\Spin_{n_0+1}\}\neq \{\Spin_{n_0+1}\}^{-1}$, which is a contradiction.
\end{proof}

\begin{corollary}
We have that $\{\B \Delta_n\}=1$ for all $n\leq 7$.
\end{corollary}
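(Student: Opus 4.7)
The plan is a short induction on $n$ using Corollary \ref{cor:main}, whose hypothesis $\{\B\Spin_n\}=\{\Spin_n\}^{-1}$ is already available for all $2\leq n\leq 8$: for $2\leq n\leq 6$ because $\Spin_n$ is special (Proposition \ref{prop:bg}), and for $n=7,8$ by Theorem \ref{prop:spin78}. These eight values will give us eight successive applications of the corollary.

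First I would dispatch the base cases $\Delta_0$ and $\Delta_1$ directly. By construction $\Delta_0$ is a copy of $\mu_2$ (the kernel of $\Pin_0\to\O_0$), and $\Delta_1=\Pin_1\cong\mu_4$. The formula $\{\B\mu_n\}=1$ in $\kstack$ follows easily by stratifying $\bA^1$ by its scaling $\mu_n$-action and using the $\mu_n$-equivariant decomposition $\bA^1=\{0\}\sqcup\bG_m$ (in the style of \cite[Proposition 3.2]{ekedahl-finite-group}), giving $\{\B\Delta_0\}=\{\B\Delta_1\}=1$.

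For the inductive step, suppose that $\{\B\Delta_m\}=1$ for every $m<n-1$, with $3\leq n\leq 8$. The second part of Corollary \ref{cor:main} then yields the equivalence
\[
\{\B\Spin_n\}=\{\Spin_n\}^{-1}\ \Longleftrightarrow\ \{\B\Delta_{n-1}\}=1,
\]
and the left-hand side is known, so the right-hand side follows. Running this for $n=3,4,\ldots,8$ successively produces $\{\B\Delta_m\}=1$ for $m=2,3,\ldots,7$, which combined with the base cases completes the proof.

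There is essentially no obstacle in this argument: all the substantive work was done in previous sections, both in computing the classes of $\B\Spin_7$ and $\B\Spin_8$ and in setting up the reduction machinery of Theorem \ref{thm:main}. The only point to watch is bookkeeping --- making sure the hypothesis ``$\{\B\Delta_m\}=1$ for all $m<n-1$'' is in place each time Corollary \ref{cor:main} is invoked --- but this is automatic, since we process $n$ in increasing order and each step extends the verified range by exactly one.
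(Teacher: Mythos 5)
Your argument is correct and essentially coincides with the paper's, which tersely invokes Corollary \ref{cor:main} together with $\{\B\Spin_n\}=\{\Spin_n\}^{-1}$ for $2\leq n\leq 8$ and leaves the resulting induction implicit. Your explicit presentation, with the direct base cases $\Delta_0\cong\mu_2$ and $\Delta_1=\Pin_1\cong\mu_4$, is a slightly more careful rendering of the same idea.
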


\begin{proof}
This follows from the previous corollary, and the fact that $\{\B\Spin_n\}=\{\Spin_n\}^{-1}$ for $2\leq n\leq 8$: for $2\leq n\leq 6$ it is true because $\Spin_n$ is special (and by Proposition \ref{prop:bg}), and we have proved it in the previous section for $n=7,8$ (Theorem \ref{prop:spin78}).
\end{proof}

\begin{remark}
We stress that it not at all clear that $\{\B \Delta_n\}=1$ for these values of $n$ without resorting to this argument. Even for $n=2$ (for which we could invoke \cite[Proposition 2.3]{ivan}), trying to prove this directly turns out to be surprisingly complicated. This makes it somewhat unlikely that, with the current methods, it will be possible to check whether $\{\B\Spin_n\}$ is equal to $\{\Spin_n\}^{-1}$ or not by explicitly computing the class of the finite groups $\{\B \Delta_m\}$. It could be possible however to prove that this class is not $1$ indirectly, for example by showing that it has some non-trivial Ekedahl invariant (see \cite[Section 5]{ekedahl-finite-group}). We plan to return to this point in future work. 
\end{remark}

\bibliographystyle{alpha}
\bibliography{G2}

\end{document}